\theoremstyle{definition}
\newtheorem{defn}{Definition}[section]
\theoremstyle{plain}
\newtheorem{thm}{Theorem}[section]
\newtheorem{prop}[thm]{Proposition}
\newtheorem{lemma}[thm]{Lemma}
\newtheorem{clly}[thm]{Corollary}
\def\prx{\partial_x}
\def\pry{\partial_y}
\def\eqalign#1{\begin{aligned}#1\end{aligned}}
\def\suma{\sum}
\newcommand{\supp}{\operatorname{supp}}
\newcommand{\re}{\mathbb{R}}
\newcommand{\co}{\mathbb{C}}
\newcommand{\nat}{\mathbb{N}}
\newcommand{\Ima}{\mathrm{Im}}
\title{On the existence and analycity of solitary waves solutions to a
two-dimesional Benjamin-Ono equation.}
\author{Germán Preciado López and F\'elix H. Soriano M\'endez}
\begin{document}\maketitle
\begin{abstract}
  We show the existence, regularity and analyticity of solitary waves
  associated to the following equation
\begin{eqnarray*}
 (u_t+u^{p}u_x+ \mathcal H\partial_x^2u+ \lambda \mathcal H\partial_y^2u )_x +\mu u_{yy}=0,
\end{eqnarray*}
where $\mathcal H$ is the Hilbert transform with respect to $x$ and
$\lambda$ and $\mu$ are nonnegative real numbers, not simultaneously
zero.
\end{abstract} \vskip12pt 
In this paper we will use the
following notations: 
\begin{itemize} 
\item $L^2=L^2 (\re ^2)$ 
\item $H^s=H^s(\re ^2)$ 
\item $S=S(\re ^2)$ 
\item $ X^{\frac12}=\{ f \in L^2 | D_x^{\frac12} f \in L^2 \hbox{ and }
D_x^{-\frac12} { f_y} \in L^2  \}$ 
\item $\tilde X^\frac12=\{ f \in L^2 | D_x^\frac 12 f \in L^2 \hbox{ and }
\partial_x^{-1} { f_y} \in L^2  \}$ 
\item $Y=\{ f \in H^1 | \partial_x^{-1} { f_y} \in L^2  \}$ 
%\item \cite{sab2} \cite{sab3}
\end{itemize}
\section{Introduction} 
In this paper we are interested in the proof of the existence and
analyticity of solitary waves associated to the following equation
\begin{eqnarray}
 (u_t+u^{p}u_x+ \mathcal H\partial_x^2u+ \lambda \mathcal H\partial_y^2u )_x +\mu u_{yy}=0 \label{BOeq},
\end{eqnarray}
where $\mathcal H$ is the Hilbert transform with respect to $x$ and
$\lambda$ and $\mu$ are nonnegative real numbers, not simultaneously
zero. We emphasize 2 striking cases commonly appearing in the
mathematical literature, when $\lambda=1$ and $\mu=0$ and when
$\lambda=0$ and $\mu=1$.  Using Kato's theory, for instance, it can be
proved that the equation \eqref{BOeq} is local well-posed in $H^s\cap
X^\frac12$ and $H^s\cap Y$, for $s> 2$.\par
Observe that
\begin{equation}
E_1(u)=\int_{\re^2}\frac{1}{2}\left((D_x^{\frac{1}2}u)^2+\lambda(D_x^{-\frac12}\partial
_y u)^{2}+ \mu(\partial_x^{-1}\partial
_y u)^{2}\right) + \frac{u^{p+1}}{p(p+1)} dx
\end{equation}
 and \begin{equation} Q(u)=\frac{1}{2}\int_{\re^2}u^2
  dx,\end{equation}
are conserved by the flow of \eqref{BOeq}.\par
This is a two-dimensional case of the Benjamin-Ono equation 
\begin{eqnarray}
\partial_tu+\mathcal H\partial_x^2{u}+u\partial_xu=0,\label{BOeq1}
\end{eqnarray}
which describes certain models in physics about wave propagation in a
stratified thin regions (see \cite{Benjamin} and \cite{Ono}). This last
equation shares with the equation KdV 
\begin{eqnarray} 
u_t+u_x+uu_x+u_{xxx}=0 \label{KdVeq}
\end{eqnarray}
many interesting properties. For example, they both have infinite
conservation laws, they have solitary waves as solutions which are
stable and behave like soliton (this last is evidenced by the
existence of multisoliton type solutions) (see \cite{ablow} and
\cite{matsuno}). Also, the local and global well-posedness was proven
in the Sobolev spaces context (in low regularity spaces inclusive,
see, e.g., \cite{iorio2}, \cite{ponce}, \cite{kenko}, \cite{kochtz}
and \cite{tao})\par
We should note that the equation \eqref{BOeq} is the model of
dispersive long wave motion in a weakly nonlinear two-fluid system,
where the interface is subject to capillarity and bottom fluid is
infinitely deep (see \cite{ablow}, \cite{abloseg} and \cite{kim}). For
this equation, with $\alpha=0$, the local well-posedness was proven in
\cite{guoboling} and the existence of solitary wave solution was
claimed in \cite{spahani}, however their proof is not complete, they
do not present a satisfactory proof of Lemma 3.4 there (Lemma
\ref{lemmaeq9} here). We use interpolation spaces techniques for
this.\par
This paper is organized as follows. In Section \ref{prel} we present
theorem about interpolation of the spaces involved. In Section
\ref{existence} we present the proof of the existence of solitary
waves solution to the equation \eqref{BOeq}, to this we use minimax
theory techniques. Finally, in Section \ref{smoothness}, we shall show
that these solitary waves are analytic (in the real sense) using the
Lizorkin theorem (see \cite{liz}) and ideas developed in \cite{kapi}.
\section{Preliminaries}\label{prel} 
In this section we examine some properties that we shall use later. It
is easy to see that $X^\frac12$ is a Hilbert space with the inner
product defined by $$\langle f,g \rangle_{{\frac12}} =\int_{\re^2} fg
+ D_x^\frac12 fD_x^\frac12 g + \lambda D_x^{-\frac12}
f_yD_x^{-\frac12} g_y+\mu\partial_x^{-1} f_y\partial_x^{-1} g_y
dxdy. $$
\begin{prop}\label{lemab}
$\partial_x S\subset X^\frac12 $.
\end{prop}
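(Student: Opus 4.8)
The plan is to pass to the Fourier side and verify the three conditions defining $X^{\frac12}$ directly. I would fix $\phi\in S$, set $g=\partial_x\phi$, and write $\hat\phi(\xi,\eta)$ for the Fourier transform of $\phi$, with $\xi$ dual to $x$ and $\eta$ dual to $y$; recall that $\partial_x$, $\partial_y$ and $D_x^{s}$ act as multiplication by $i\xi$, $i\eta$ and $|\xi|^{s}$ respectively. Since $\phi\in S$ forces $g=\partial_x\phi\in S\subset L^2$, the condition $g\in L^2$ is immediate, and it remains only to check that $D_x^{\frac12}g\in L^2$ and $D_x^{-\frac12}g_y\in L^2$.

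Next I would invoke Plancherel's theorem to rewrite these two conditions as finiteness of the integrals
\[
\nsc{D_x^{\frac12}g}{L^2}=\int_{\re^2}|\xi|\,|\xi|^2\,|\hat\phi(\xi,\eta)|^2\,d\xi\,d\eta=\int_{\re^2}|\xi|^3\,|\hat\phi|^2\,d\xi\,d\eta
\]
and
\[
\nsc{D_x^{-\frac12}g_y}{L^2}=\int_{\re^2}|\xi|^{-1}\,|\eta|^2\,|\xi|^2\,|\hat\phi|^2\,d\xi\,d\eta=\int_{\re^2}|\xi|\,|\eta|^2\,|\hat\phi|^2\,d\xi\,d\eta .
\]
Because both integrands involve only nonnegative powers of $|\xi|$ and $|\eta|$ weighted against $|\hat\phi|^2$, and $\hat\phi\in S$ decays faster than any polynomial, each integral converges, which yields $g\in X^{\frac12}$.

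The step I expect to be the real point of the argument is the treatment of the negative-order multiplier $|\xi|^{-1/2}$ carried by $D_x^{-\frac12}$, which is singular at $\xi=0$ and would otherwise threaten integrability near the $\eta$-axis. The resolution is that the extra factor $\partial_x$ built into $g=\partial_x\phi$ supplies a symbol $i\xi$ whose modulus exactly absorbs the $|\xi|^{-1}$ weight, leaving the harmless factor $|\xi|$ in the second integral; this is precisely why $\partial_x\phi$ lands in $X^{\frac12}$ even though a generic $\phi\in S$, with $\hat\phi(0,\eta)\not\equiv 0$, need not satisfy $D_x^{-\frac12}\phi_y\in L^2$. Once this cancellation is noted, the large-frequency behaviour is controlled automatically by the Schwartz decay of $\hat\phi$, so no further estimates are required and the proposition follows from the two elementary computations above.
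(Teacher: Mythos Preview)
Your argument is correct and is exactly the elementary Fourier-side verification one expects; the paper itself states Proposition~\ref{lemab} without proof, so there is nothing to compare against beyond noting that your computation is the natural one.

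One small remark: although the set-builder definition of $X^{\frac12}$ lists only the conditions $D_x^{\frac12}f\in L^2$ and $D_x^{-\frac12}f_y\in L^2$, the inner product the paper places on $X^{\frac12}$ also carries the term $\mu\,\partial_x^{-1}f_y$, so for completeness you may want to add the one-line check
\[
\nsc{\partial_x^{-1}g_y}{L^2}=\int_{\re^2}|\xi|^{-2}\,|\eta|^2\,|\xi|^2\,|\hat\phi|^2\,d\xi\,d\eta=\int_{\re^2}|\eta|^2\,|\hat\phi|^2\,d\xi\,d\eta=\nsc{\phi_y}{L^2}<\infty,
\]
where the same cancellation mechanism (the $\partial_x$ in $g=\partial_x\phi$ absorbing the singular $|\xi|^{-1}$) is at work.
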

Let us recall two important results whose proofs can be found in
\cite{linaresponce}
\begin{lemma}\label{lem1PL}
  Let $s\in(0,1)$,then $H^s({\re}^2)$ is continuously embedded in
  $L^p({\re}^2)$ with $p=\frac{2}{1-s}$. Moreover, for
  $f\in{H}^s({\re}^2)$, $s\in(0,1)$,
\[\|f\|_{L^p}\leq{C}_s\|D_x^sf\|_{L^2}\leq{c}\|f\|_s\] where $$D^{l}
=(-\Delta)^{\frac l2} f = ((2\pi|\xi|)^{l} \hat f)^\vee.$$
\end{lemma}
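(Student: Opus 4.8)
The plan is to deduce the embedding from the Hardy--Littlewood--Sobolev inequality for the Riesz potential, after disposing of the trivial estimate on the right. Throughout I read $D^{s}=(-\Delta)^{s/2}$ exactly as defined in the statement, namely the \emph{full} fractional derivative with symbol $(2\pi|\xi|)^{s}$, $\xi=(\xi_{1},\xi_{2})$; this is the reading that makes the isotropic exponent $p=\frac{2}{1-s}$ correct in dimension $n=2$. The rightmost inequality $\|D^{s}f\|_{L^{2}}\le c\|f\|_{s}$ is immediate from Plancherel and the pointwise bound $(2\pi|\xi|)^{2s}\le C_{s}(1+|\xi|^{2})^{s}$, valid for every $\xi$ because $s>0$: indeed $\|D^{s}f\|_{L^{2}}^{2}=\int_{\re^{2}}(2\pi|\xi|)^{2s}|\hat f|^{2}\,d\xi\le C_{s}\int_{\re^{2}}(1+|\xi|^{2})^{s}|\hat f|^{2}\,d\xi=C_{s}\|f\|_{s}^{2}$. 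By density of $S$ in $H^{s}$ it then suffices to establish the left inequality for $f\in S$ and pass to the limit.

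For the main inequality I would set $g=D^{s}f\in L^{2}$ and invert the fractional derivative on the Fourier side: $\hat f(\xi)=(2\pi|\xi|)^{-s}\hat g(\xi)$, so that $f=I_{s}g$, where $I_{s}$ is the Riesz potential of order $s$, i.e.\ the Fourier multiplier with symbol $(2\pi|\xi|)^{-s}$. For $0<s<n=2$ this multiplier is the Fourier transform of the locally integrable kernel $k_{s}(x)=\gamma_{s}\,|x|^{-(2-s)}$, whence $I_{s}g=k_{s}*g$, and the identity $f=I_{s}(D^{s}f)$ holds for $f\in S$ in the sense of tempered distributions and realizes $f$ as this convolution. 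The crucial structural fact is that $k_{s}$ is \emph{not} integrable but lies in the weak space $L^{2/(2-s)}_{\mathrm{w}}(\re^{2})$, which is precisely the hypothesis needed to run the Hardy--Littlewood--Sobolev estimate.

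Finally I would invoke the Hardy--Littlewood--Sobolev inequality: for $0<\alpha<n$ and $1<q<r<\infty$ with $\frac{1}{r}=\frac{1}{q}-\frac{\alpha}{n}$ one has $\|I_{\alpha}g\|_{L^{r}}\le C\|g\|_{L^{q}}$. Taking $n=2$, $\alpha=s$, $q=2$ gives $\frac{1}{r}=\frac{1}{2}-\frac{s}{2}=\frac{1-s}{2}$, that is $r=\frac{2}{1-s}=p$, and the admissibility requirements $q=2>1$ and $q<r<\infty$ hold exactly because $s\in(0,1)$ forces $p\in(2,\infty)$. Hence $\|f\|_{L^{p}}=\|I_{s}g\|_{L^{p}}\le C_{s}\|g\|_{L^{2}}=C_{s}\|D^{s}f\|_{L^{2}}$ for $f\in S$, and density completes the argument. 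The only genuinely non-elementary input is the Hardy--Littlewood--Sobolev inequality itself; if a self-contained derivation is preferred, I would obtain it from the weak-type $(2,p)$ bound for convolution against the weak-$L^{2/(2-s)}$ kernel $k_{s}$ (splitting $k_{s}$ into its restrictions to a small and a large ball and optimizing the radius) combined with the Marcinkiewicz interpolation theorem. The main obstacle is thus this weak-type estimate and the accompanying exponent bookkeeping, not any feature peculiar to the two-dimensional setting.
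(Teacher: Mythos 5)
Your proof is correct and, since the paper offers no proof of this lemma at all --- it simply defers to \cite{linaresponce} --- your Riesz-potential plus Hardy--Littlewood--Sobolev argument (with the trivial right-hand inequality by Plancherel and $(2\pi|\xi|)^{2s}\le C_s(1+|\xi|^2)^s$) is exactly the standard proof found in that reference, so this counts as essentially the same approach. You also resolved the statement's typo the right way: read literally with $D_x^s$ the inequality fails by scaling in $y$, and your isotropic reading $D^s=(-\Delta)^{s/2}$ is the one for which $p=\frac{2}{1-s}$ is the correct exponent in $\re^2$.
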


\begin{lemma}\label{lem2PL}
 Suppose that $D_x^{s_1}f\in{L^2}$ and $D_x^{s_2}f\in{L^2}$. Then, for $s\in[s_1,s_2]$, $D_x^sf\in{L^2}$ and
\begin{eqnarray}
\|D_x^sf\|\leq{C}_s\|D_x^{s_1}f\|^\theta\|D_x^{s_2}f\|^{1-\theta}
\end{eqnarray}
where $\theta=\frac{s_2-s}{s_2-s_1}$.
\end{lemma}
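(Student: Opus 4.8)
The plan is to reduce the statement to a pointwise inequality on the Fourier side and then apply H\"older's inequality, exploiting that $D_x^s$ is the Fourier multiplier with symbol $(2\pi|\xi_1|)^s$ (here $\xi=(\xi_1,\xi_2)$ is dual to $(x,y)$). First I would use Plancherel's theorem to rewrite each of the three quantities in the statement as a weighted $L^2$ integral of $\hat f$:
$$\nor{D_x^{s_j}f}^2=\int_{\re^2}(2\pi|\xi_1|)^{2s_j}\,|\hat f(\xi)|^2\,d\xi,\qquad \nor{D_x^{s}f}^2=\int_{\re^2}(2\pi|\xi_1|)^{2s}\,|\hat f(\xi)|^2\,d\xi.$$
This turns the desired inequality into a comparison between three integrals of the same density $|\hat f|^2$ against three powers of the weight $(2\pi|\xi_1|)^2$.

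Next I would record the elementary identity $s=\theta s_1+(1-\theta)s_2$, which is exactly the definition $\theta=\frac{s_2-s}{s_2-s_1}$ rewritten, and use it to factor the weight pointwise,
$$(2\pi|\xi_1|)^{2s}=\big((2\pi|\xi_1|)^{2s_1}\big)^{\theta}\,\big((2\pi|\xi_1|)^{2s_2}\big)^{1-\theta}.$$
Splitting the density as $|\hat f|^2=(|\hat f|^2)^{\theta}(|\hat f|^2)^{1-\theta}$ and applying H\"older's inequality with conjugate exponents $p=1/\theta$ and $q=1/(1-\theta)$ (so that $\tfrac1p+\tfrac1q=\theta+(1-\theta)=1$) then gives
$$\nor{D_x^{s}f}^2\le\Big(\int_{\re^2}(2\pi|\xi_1|)^{2s_1}|\hat f|^2 d\xi\Big)^{\theta}\Big(\int_{\re^2}(2\pi|\xi_1|)^{2s_2}|\hat f|^2 d\xi\Big)^{1-\theta}=\nor{D_x^{s_1}f}^{2\theta}\,\nor{D_x^{s_2}f}^{2(1-\theta)}.$$
Taking square roots yields the claim, in fact with $C_s=1$; moreover the right-hand side is finite precisely under the hypotheses $D_x^{s_1}f,D_x^{s_2}f\in L^2$, which simultaneously certifies $D_x^s f\in L^2$. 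The endpoint cases $\theta\in\{0,1\}$ (i.e. $s=s_2$ or $s=s_1$) are trivial and I would dispose of them separately.

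I do not anticipate a real obstacle: the entire content is the single H\"older step, and the only points needing a word of care are the nonnegativity and measurability of the integrand (immediate, since the weight and $|\hat f|^2$ are nonnegative measurable functions) and the finiteness of the H\"older product (guaranteed by the two endpoint hypotheses). Should one wish to state the result for the full gradient $D^s=(-\Delta)^{s/2}$ rather than $D_x^s$, the argument is identical with $|\xi_1|$ replaced by $|\xi|$.
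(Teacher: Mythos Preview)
Your argument is correct: the Plancherel reduction followed by the pointwise factorization $(2\pi|\xi_1|)^{2s}=\big((2\pi|\xi_1|)^{2s_1}\big)^{\theta}\big((2\pi|\xi_1|)^{2s_2}\big)^{1-\theta}$ and H\"older's inequality with exponents $1/\theta$ and $1/(1-\theta)$ is exactly the standard proof, and it indeed yields the constant $C_s=1$. The paper does not actually supply its own proof of this lemma---it merely records the statement and refers the reader to \cite{linaresponce}---so there is nothing to compare your approach against beyond noting that what you wrote is the usual one-line interpolation argument one would find in that reference.
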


As consequence of these two lemmas we have the following useful
embedding lemma.

\begin{lemma}\label{lemma7}
(a) For $\lambda >0$ and $p\le 2$, there exists  $C>0$, such that, for each $f\in{X^\frac12}$,
\begin{eqnarray}\label{ineq2.0}
\|f\|^{p+2}_{L^{p+2}({\re}^2)}\leq{C}\|f\|^{2-p}\|D_x^{-1/2}\partial_yf\|^{p/2}\|D_x^{1/2}f\|^{\frac{3p}{2}}
\end{eqnarray}
(b) For $\lambda =0$, $\mu>0$ and $p\le \frac43$, there exists  $C>0$, such that, for each $f\in{X^\frac12}$,
\begin{eqnarray}\label{ineq2.01}
  \|f\|^{p+2}_{L^{p+2}({\re}^2)}\leq{C}\|f\|^{\frac{4-3p}3} \|D_x^{1/2}f\|^{\frac{9p+4}6}
  \|\partial_x^{-1} \partial_y
  f\|^{\frac{3p}{2}}
\end{eqnarray}
In particular, for each $f\in{X^\frac12}$

\[\|f\|_{L^{p+2}}\leq{C}\|f\|_{X^\frac12},\] 
where $$\begin{cases}
0\le p\le 2 & \text{ if } \lambda>0,\\
0\le p\le \frac43 & \text{ if } \lambda=0 \text{ and } \mu>0.
\end{cases}
$$
\end{lemma}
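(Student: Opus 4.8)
The plan is to prove a single \emph{endpoint} embedding in each case --- into $L^4$ when $\lambda>0$ and into $L^{10/3}$ when $\lambda=0,\ \mu>0$ --- and then recover the full range of exponents by interpolating the $L^{p+2}$ norm between $L^2$ and this endpoint. The endpoint is exactly the value of $p$ at which $\|f\|_{L^2}$ drops out of the right-hand side ($p=2$ in (a), $p=\frac43$ in (b)), and both inequalities are scale invariant under $f\mapsto f(\alpha x,\beta y)$, which fixes all the exponents and serves as a consistency check.

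For part (a) I first reduce to a single mixed fractional derivative. Applying the one-dimensional Sobolev embedding of Lemma \ref{lem1PL} in the $x$ variable for a.e.\ fixed $y$, with smoothing index $\frac14$ so that the target is $L^4_x$, gives $\|f(\cdot,y)\|_{L^4_x}\le C\|D_x^{1/4}f(\cdot,y)\|_{L^2_x}$, whence $\|f\|_{L^4(\re^2)}^4\le C\int_{\re}\|D_x^{1/4}f(\cdot,y)\|_{L^2_x}^4\,dy$. Setting $\phi(y)=\|D_x^{1/4}f(\cdot,y)\|_{L^2_x}$, applying the same embedding in the $y$ variable, and using Minkowski's integral inequality to commute the multiplier $D_y^{1/4}$ with the partial norm $\|\cdot\|_{L^2_x}$, yields $\|f\|_{L^4}\le C\|D_x^{1/4}D_y^{1/4}f\|_{L^2}$. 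The remaining step is a Hölder inequality in frequency that splits the symmetric weight $|\xi|^{1/2}|\eta|^{1/2}$ into the two anisotropic weights encoded in the $X^{1/2}$ norm: from $\|D_x^{1/4}D_y^{1/4}f\|_{L^2}^2=\int_{\re^2}|\xi|^{1/2}|\eta|^{1/2}|\hat f|^2\,d\xi d\eta$, Hölder with exponents $\frac43$ and $4$ applied to $|\xi|^{1/2}|\eta|^{1/2}=|\xi|^{3/4}\cdot\big(|\xi|^{-1/4}|\eta|^{1/2}\big)$ (the two-variable analogue of Lemma \ref{lem2PL}) gives $\|D_x^{1/4}D_y^{1/4}f\|_{L^2}^2\le\|D_x^{1/2}f\|^{3/2}\|D_x^{-1/2}\partial_y f\|^{1/2}$, i.e.\ $\|f\|_{L^4}^4\le C\|D_x^{1/2}f\|^{3}\|D_x^{-1/2}\partial_y f\|$. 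For $0\le p\le2$ the log-convexity of $L^q$ norms then gives $\|f\|_{L^{p+2}}^{p+2}\le\|f\|_{L^2}^{2-p}\|f\|_{L^4}^{2p}$, and substituting the endpoint bound produces \eqref{ineq2.0}.

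Part (b) is entirely analogous: the target exponent becomes $L^{10/3}$ (smoothing index $\frac15$ in each Sobolev step), and the frequency Hölder is run with exponents $\frac54$ and $5$, now distributing the weight $|\xi|^{2/5}|\eta|^{2/5}$ between $|\xi|$ and $|\xi|^{-2}\eta^2$ (the latter being the symbol of $\partial_x^{-1}\partial_y$), which yields the endpoint $\|f\|_{L^{10/3}}^{10/3}\le C\|D_x^{1/2}f\|^{8/3}\|\partial_x^{-1}\partial_y f\|^{2/3}$; interpolating $L^{p+2}$ between $L^2$ and $L^{10/3}$ for $0\le p\le\frac43$ then gives \eqref{ineq2.01}. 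In both cases the concluding embedding $\|f\|_{L^{p+2}}\le C\|f\|_{X^{1/2}}$ is immediate, since each of $\|f\|$, $\|D_x^{1/2}f\|$ and $\|D_x^{-1/2}\partial_y f\|$ (resp.\ $\|\partial_x^{-1}\partial_y f\|$) is dominated by $\|f\|_{X^{1/2}}$ and the three exponents on the right-hand side sum to $p+2$.

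The step I expect to be the main obstacle is the endpoint itself, which is borderline: a naive slicing that passes through $L^\infty_y$ (and hence through $\|\partial_y f\|$) is too lossy and produces the wrong weights. Keeping every estimate at the $L^2$-based fractional level and only at the end converting the symmetric mixed weight into the anisotropic ones through the frequency Hölder is what makes the exponents come out right; the attendant technical point is the rigorous justification of the Minkowski commutation of the fractional multiplier with the partial $L^2$ norm.
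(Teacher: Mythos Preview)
Your argument is correct and establishes the lemma, but by a genuinely different route. The paper works directly for each $p$: it applies one-dimensional Sobolev in $x$, interpolates $D_x^{p/[2(p+2)]}$ between $D_x^{0}$ and $D_x^{1/2}$ (between $D_x^{-1/4}$ and $D_x^{1/2}$ in case (b)), uses H\"older in $y$, and then controls the resulting $\sup_y$ of an $L^2_x$-norm via the fundamental theorem of calculus, e.g.\ $\|f(\cdot,y)\|_{L^2_x}^2=2\int_{-\infty}^{y}\int D_x^{1/2}f\,D_x^{-1/2}\partial_y f\,dx\,dy'$. You instead prove a single endpoint by two successive one-dimensional Sobolev embeddings plus a frequency-side H\"older, and then interpolate with $L^2$. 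The paper's method is more elementary---no fractional $y$-derivatives appear and no mixed-norm swap is needed---while yours makes the scaling structure transparent. Your ``Minkowski commutation'' is correct but is cleanest if you reorder norms \emph{before} the second Sobolev step: $\bigl\|\,\|D_x^{1/4}f\|_{L^2_x}\,\bigr\|_{L^4_y}\le\bigl\|\,\|D_x^{1/4}f\|_{L^4_y}\,\bigr\|_{L^2_x}$ by Minkowski (since $4\ge 2$), and then apply the $y$-embedding inside the outer $L^2_x$-norm; this avoids differentiating a norm.

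One discrepancy to flag in (b): interpolating your endpoint $\|f\|_{L^{10/3}}^{10/3}\le C\|D_x^{1/2}f\|^{8/3}\|\partial_x^{-1}\partial_y f\|^{2/3}$ against $L^2$ yields
\[
\|f\|_{L^{p+2}}^{p+2}\le C\,\|f\|^{(4-3p)/2}\,\|D_x^{1/2}f\|^{2p}\,\|\partial_x^{-1}\partial_y f\|^{p/2},
\]
whose exponents do \emph{not} match those printed in \eqref{ineq2.01}; indeed the exponents in \eqref{ineq2.01} sum to $2+2p\neq p+2$, so the statement itself carries a typo. The paper's own argument (which interpolates against $D_x^{-1/4}f$ rather than $f$) produces yet another set of exponents. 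All versions, however, give the embedding $X^{1/2}\hookrightarrow L^{p+2}$ for $0\le p\le\tfrac43$, which is the only consequence used downstream.
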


\begin{proof}
  By Proposition \ref{lemab}, it is enough show \eqref{ineq2.0} for
  $f=\partial_x \phi$, $\phi\in S$. First, let us suposse
  $\lambda>0$. Lemmas \ref{lem1PL}, \ref{lem2PL} and Hölder inequality
  allow us to show that
\begin{align*}
\|f\|_{p+2}^{p+2}=\int_{{\re}^2)}|f(x,y)|^{p+2}dxdy&\leq{C}\int_{{\re}^2}\|D_x^{p/2(p+2)}f(.,y)\|_0^{p+2}dy\\
&\leq C\int_{{\re}}\|D_x^{1/2}f(.,y)\|^{p}\|f(.,y)\|^2\,dy\\
&\leq{C}\|D_x^{1/2}f\|^{p}\left(
\int_{{\re}}\|f(.,y)\|^{4/(2-p)}\, dy\right)^{(2-p)/2}\\
&\leq{C}\|D_x^{1/2}f\|^{p}\|f\|_0^{2-p}\sup_{y\in{\re}}\|f(.,y)\|^{p}
\end{align*}
On the other hand, for each $y\in{\re}$,
\begin{align*}
\|f(.,y)\|^2&=\int_{\re}f^2(x,y)dx\\
&=2\int_{{\re}}\int_{-\infty}^yf(x,y_1)\partial_yf(.,y_1)dy_1dx\\
&=2\int_{-\infty}^y\int_{{\re}}D_x^{1/2}f(x,y_1)D_x^{-1/2}\partial_{y_1}f(x,y_1)dxdy_1\\
&\leq2\|D_x^{1/2}f\|\|D_x^{-1/2}\partial_{y}f\|
\end{align*}
Hence, we obtain the inequality \eqref{ineq2.0}.\par Now, let us
suppose $\lambda=0$. Proceeding as above, we get 

\begin{equation}
\begin{split}\label{2.12}
  \int_{\re^2} |f|^{p+2} \, d\, x d\, y &\leq c \int_{\re}
  ||D_x^{p/[2(p+2)]}f(\cdot,
  y)||^{p+2}\,d\,y\\
  & \leq c\int_{\re} ||D^{1/2}_xf(\cdot,
  y)||^{(3p+2)/3}||D^{-1/4}_xf(\cdot,y)||^{4/3}
  \, d\, y\\
  & \leq c ||D^{1/2}_xf||_0^{(3p+2)/3} \left( \int_{\re}
    ||D^{-1/4}_xf(\cdot,y)||^{8/(4-3p)}
    \, d\, y\right)^{(4-3p)/6}\\
  & \leq c||D^{1/2}_xf||_0^{(3p+2)/3}||f||_0^{(4-3p)/3}\left( \sup _{y
      \in \re}||D^{-1/4}_xf(\cdot,
    y)||_{L^2(\re)}^{p}\right)
\end{split}
\end{equation}
In this case, for each $y\in{\re}$,
\begin{equation}
\begin{split}
  ||D^{-1/4}_xf(\cdot,y)||_{L^2}^2 &= \int_{\re} \left( D^{-1/4}_xf \right)^2(x,y)\, d \,x\\
  & = 2\int_{\re}\int_{-\infty}^{y}D^{-1/4}_x f(x,y_1)D^{-1/4}_x f_y(x,y_1)\,d\,y_1\,d\,x\\
  & = -2\int_{-\infty}^{y}\int_{\re} D^{1/2}_xf_x(x,y_1)D_x^{-1}f_y(x,y_1)\,d\,x\,d\,y_1\\
  & \leq 2\int_{\re}||D^{-1/2}_xf_x(\cdot, y)||_{L^2(\re)}||\partial_x^{-1}f_y(\cdot, y)||_{L^2(\re)}\,d\,y\\
  & \leq 2||D^{1/2}_xf||_0||\partial_x^{-1}f_y||_0 .
  \end{split}
\end{equation}
Hence it follows \eqref{ineq2.01}. 
\end{proof}

Let $X_0=\{ f\in L^2 |\, \partial_x f,\ \partial_x^{-1} f_{yy} \text{
  and } \partial_x^{-2} f_{yy} \in L^2\}$. $X_0$ is a Hilbert space
with the inner product $$ \langle f,g \rangle_0 =\int_{\re^2} fg
+ \partial_x f \partial_x g +\lambda\partial_x^{-1}
f_{yy}\partial_x^{-1} g_{yy} +\mu\partial_x^{-2} f_{yy}\partial_x^{-2}
g_{yy} dxdy. $$ It is obvious $(X_0, L_2)$ is a compatible couple in
the interpolation theory sense (see \cite{berghlofs}).
\begin{thm}  $X^\frac12 =(X_0, L_2)_{[\frac12]}$.
\end{thm}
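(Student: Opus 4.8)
The plan is to pass to the Fourier side and recognise the three spaces as weighted $L^2$-spaces over the common measure space $(\re^2_{\xi,\eta},\,d\xi\,d\eta)$, so that the asserted identity reduces to the classical complex interpolation formula for weighted $L^2$-spaces together with an elementary comparison of weights. Writing $\mathcal F$ for the Fourier transform, Plancherel's theorem rewrites each defining inner product as an integral of $|\hat f|^2$ against a weight: the four summands of $\langle f,f\rangle_0$, namely $|f|^2$, $|\partial_x f|^2$, $\lambda|\partial_x^{-1}f_{yy}|^2$ and $\mu|\partial_x^{-2}f_{yy}|^2$, contribute respectively $1$, $\xi^2$, $\lambda\eta^4/\xi^2$ and $\mu\eta^4/\xi^4$ (harmless $2\pi$ factors suppressed), while the summands of $\langle f,f\rangle_{\frac12}$ contribute $1$, $|\xi|$, $\lambda\eta^2/|\xi|$ and $\mu\eta^2/\xi^2$. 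Thus $\mathcal F$ carries $L^2$, $X_0$ and $X^\frac12$ onto $L^2(1)$, $L^2(w_0)$ and $L^2(w_{1/2})$, isometrically up to the fixed $2\pi$-normalisation, where
\[
w_0(\xi,\eta)=1+\xi^2+\lambda\frac{\eta^4}{\xi^2}+\mu\frac{\eta^4}{\xi^4},
\qquad
w_{1/2}(\xi,\eta)=1+|\xi|+\lambda\frac{\eta^2}{|\xi|}+\mu\frac{\eta^2}{\xi^2},
\]
and in particular is an isomorphism of the compatible couple $(X_0,L^2)$ onto $(L^2(w_0),L^2(1))$.

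Since complex interpolation is a functor and commutes with such isomorphisms, I would first reduce the statement to the weighted-$L^2$ computation $(L^2(w_0),L^2(1))_{[\frac12]}=L^2(w_0^{1/2})$. This is exactly the complex interpolation theorem for weighted $L^p$-spaces (here $p_0=p_1=2$ and $\theta=\tfrac12$, so the resulting weight is the geometric mean $w_0^{1-\theta}\cdot 1^{\theta}=w_0^{1/2}$); see \cite{berghlofs}. The only hypotheses to verify are that $w_0$ is positive and finite a.e.\ and that the underlying measure is $\sigma$-finite, both of which hold: the singularities of $w_0$ along the line $\xi=0$ are irrelevant because $w_0\,d\xi\,d\eta$ is still a $\sigma$-finite measure on $\re^2$.

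It then remains to identify $L^2(w_0^{1/2})$ with $\mathcal F(X^\frac12)=L^2(w_{1/2})$, and for this I would prove the pointwise equivalence $w_0^{1/2}\asymp w_{1/2}$. Using the elementary bound $\sqrt{a_1+a_2+a_3+a_4}\asymp\sqrt{a_1}+\sqrt{a_2}+\sqrt{a_3}+\sqrt{a_4}$ for nonnegative $a_i$ (with comparison constants between $1$ and $2$), one obtains
\[
w_0^{1/2}\asymp 1+|\xi|+\sqrt{\lambda}\,\frac{\eta^2}{|\xi|}+\sqrt{\mu}\,\frac{\eta^2}{\xi^2},
\]
which is comparable to $w_{1/2}$ because, for the fixed parameters, $\sqrt{\lambda}\asymp\lambda$ and $\sqrt{\mu}\asymp\mu$ (and the corresponding term simply drops out when $\lambda=0$ or $\mu=0$, leaving exactly the matching surviving terms). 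Hence $L^2(w_0^{1/2})=L^2(w_{1/2})$ with equivalent norms, and pulling back by $\mathcal F^{-1}$ yields $X^\frac12=(X_0,L^2)_{[\frac12]}$ with equivalent norms.

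The genuinely non-formal step is the weight comparison of the previous paragraph, together with the observation that the term-by-term square roots of the $X_0$-weight reproduce precisely the $X^\frac12$-weight, since $\sqrt{\xi^2}=|\xi|$, $\sqrt{\eta^4/\xi^2}=\eta^2/|\xi|$ and $\sqrt{\eta^4/\xi^4}=\eta^2/\xi^2$. I expect the one point deserving care is the harmless mismatch that interpolation produces the coefficient $\sqrt{\lambda}$ (resp.\ $\sqrt{\mu}$) rather than $\lambda$ (resp.\ $\mu$): this affects only the comparison constants because $\lambda,\mu$ are fixed, but it must be acknowledged so that the norm equivalence is not mistaken for an isometry. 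Everything else is the routine transcription of the operator norms into Fourier multipliers and the invariance of complex interpolation under the isomorphism $\mathcal F$.
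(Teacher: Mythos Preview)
Your argument is correct. It takes a genuinely different route from the paper's.

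The paper proves both inclusions by hand: for $\phi\in X^{\frac12}$ it writes down the explicit analytic family
\[
f(z)=e^{-\delta(z-\frac12)^2}\Bigl(\bigl[1+|\xi|(1+\lambda|\eta^2/\xi^2|+\mu|\eta^2/\xi^3|)\bigr]^{\,z-\frac12}\hat\phi\Bigr)^{\vee}
\]
and checks the admissibility conditions directly, and for the reverse inclusion it pairs an arbitrary admissible $f$ against a Fourier-truncated dual family $\Phi_n(z)$ and invokes the three lines lemma. In effect the paper is redoing, in this particular instance, the proof of the Stein--Weiss/Bergh--L\"ofstr\"om theorem on complex interpolation of weighted $L^p$-spaces; the same weight comparison $w_0\asymp w_{1/2}^2$ is what makes the endpoint checks go through, though the paper leaves that verification implicit.

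You instead invoke that general theorem once and reduce everything to the elementary pointwise equivalence $w_0^{1/2}\asymp w_{1/2}$, which you verify cleanly. Your approach is shorter and more transparent about where the content lies (the weight comparison), and it makes explicit the caveat that the identity holds only up to equivalent norms because of the $\sqrt{\lambda}$ vs.\ $\lambda$ and $\sqrt{\mu}$ vs.\ $\mu$ discrepancy. The paper's approach has the merit of being self-contained, requiring only the three lines lemma rather than an external interpolation theorem.
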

\begin{proof}
Let $\phi \in X^\frac 12 $ and $$f(z) =e^{-\delta(z-\frac12)^2}\left(
\left[1+ |\xi| \left( 1+\lambda \left|\frac
  {\eta^2}{\xi^2}\right|+ \mu \left|\frac
  {\eta^2}{\xi^3}\right|\right)\right]^{z-\frac12}\, \hat \phi
\right)^{\vee}.$$ It is easy to see that 
\begin{equation}\label{condf}
\begin{cases} f(z)\in
  L^2,&\text{ for all }0 \le \Ima(z) \le1\\ f\text{ is analytic on }
  0<\Ima(z)<1\\ f(it)\in X_0,&\text{ for all } t\in\re \\ f(1+it)\in
  L^2,&\text{ for all } t\in\re\\ f(z) \to 0,&\text{ as }|\Ima(z)|\to
  \infty,\text{ for } \mathrm{Re} (z)=0\\ f(\frac12)=\phi.
\end{cases}
\end{equation}
Then $\phi\in (X_0, L_2)_{[\frac12]}$ and $\|\phi \|_{[\frac12]}\le
c\|\phi\|_{X^\frac12}$. Now let $\phi\in (X_0, L_2)_{[\frac12]}$,
$ \hat \phi_n=\chi_{|(\xi,\eta)|\le n}\hat \phi $, where
$\chi_{|(\xi,\eta)|\le n}$ is the characteristic function of $|(\xi,\eta)|\le n$ , $\Phi_n (z)=\left((1+|\xi|+\lambda
|\xi|^{-1}|\eta|^2+ \mu |\xi|^{-2}|\eta|^2)^{\frac32-z}\hat \phi_n\right)^\vee$ and
$f$ a function on $0 \le \Ima(z) \le1$ into $L_2$ that satisfies \eqref{condf}.
It is clear that $\Phi_n$ is analytic on $\co$ with values in
$L^2$. Therefore $(f(z), \Phi_n(z))_{L^2}$ is a continuous function on
$0\le \Ima(z)\le 1$  and analytic on $0 < \Ima(z) <1$. Furthermore,
$|(f(it), \Phi_n(it))_{L^2}|\le \|f(it)\|_{X_0}\|
\phi_n\|_{X^\frac12}$ and $|(f(1+it), \Phi_n(1+it))_{L^2}|\le
\|f(1+it)\|_{L^2}\| \phi_n\|_{X^\frac12}$. By three lines lemma, we
have that $$|(f(z), \Phi_n(z))_{L^2}|\le \max(\sup
\|f(it)\|_{X_0},\sup \|f(1+it)\|_{L^2})\| \phi_n\|_{X^\frac12}.$$
Taking $z=\frac12$, we have that  $\| \phi_n\|_{X^\frac12}\le \max(\sup
\|f(it)\|_{X_0},\sup \|f(1+it)\|_{L^2})$, for all $n$. So, by Lebesgue
monotone convergence theorem we have that $\phi\in X_0$.
\end{proof}
\begin{defn}
Let $\Omega$ an open connected set in $\re^2$ and $X$ any $X_0$ or
$X^\frac12$. We denote by
$X(\Omega)$ the set $\big\{ f\in L^2(\Omega)|\, f=g \text{
  for some } g\in X\big\}$. With the norm $$ \|
f\|_{X(\Omega)}= \mathop{\inf_{g|_\Omega =f}}_{g \in
  X^\frac12} \|g \|_{X^\frac12} $$ $X(\Omega)$ is a Banach
space. 
\end{defn}
\begin{lemma}\label{lemma2.5}
Suposse that $\Omega=(a,b)\times (c,d)$ and $\phi$ is a non negative
function $C^\infty$ on $\re$ such that $\supp \phi\subseteq [a,b]$ and
$\int \phi=1$. Then, there exists a constant $C$, depending only on
$\Omega$ and $\phi$, such that for all $f\in L^2_{loc}$ with $(\partial_x^2 f,
\partial_y^2 f) \in
L^2_{loc}$, \begin{equation} \label{ineq2.5}\left\|
  f-\frac1{b-a}\int_a^b f\, dx -\left(x-\frac {a+b}2 \right)\int_a^b f_x\phi \,dx \right\|_{L^2(\Omega)}\le C\|\partial_x^2
  f\|_{L^2(\Omega)},
\end{equation} 
\begin{equation} \label{ineq2.5-6}\left\|
  f_x -\int_a^b f_x\phi \,dx \right\|_{L^2(\Omega)}\le C\|\partial_x^2
  f\|_{L^2(\Omega)}
\end{equation}
and \begin{equation} \label{ineq2.6}\left\|
f_{yy}-\frac1{b-a}\int_a^b f_{yy}\, dx -\left(x-\frac {a+b}2\right)\int_a^b f_{yyx}\phi \,dx \right\|_{L^2(\Omega)}\le C\|\partial_y^2
f\|_{L^2(\Omega)} 
\end{equation}
\end{lemma}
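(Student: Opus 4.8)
The plan is to fix $y$, read each of the three estimates as a one–dimensional inequality on $(a,b)$, and then integrate the square over $y\in(c,d)$, using that $\|g\|_{L^2(\Omega)}^2=\int_c^d\|g(\cdot,y)\|_{L^2(a,b)}^2\,dy$. For fixed $y$ I write $g(x)=f(x,y)$ and set the midpoint $c_0=\frac{a+b}2$. The left-hand sides of \eqref{ineq2.5} and \eqref{ineq2.6} are built from the operator $Pg(x)=g(x)-\frac1{b-a}\int_a^b g\,dt-(x-c_0)\int_a^b g'\phi\,dt$, while \eqref{ineq2.5-6} involves $Qh(x)=h(x)-\int_a^b h\phi\,dt$. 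The key algebraic observation is that $P$ annihilates affine functions and $Q$ annihilates constants: if $g(x)=\alpha+\beta x$ then $\frac1{b-a}\int_a^b g\,dt=\alpha+\beta c_0$ (this is exactly where the midpoint matters) and $\int_a^b g'\phi\,dt=\beta$, so $Pg\equiv0$; likewise $Q$ of a constant vanishes because $\int\phi=1$.

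For \eqref{ineq2.5-6} I would set $h=f_x$ and use $\int_a^b\phi=1$ to write the identity $h(x)-\int_a^b h(t)\phi(t)\,dt=\int_a^b\big(h(x)-h(t)\big)\phi(t)\,dt=\int_a^b\Big(\int_t^x h'(s)\,ds\Big)\phi(t)\,dt$. Cauchy--Schwarz gives $\big|\int_t^x h'\big|\le|x-t|^{1/2}\|h'\|_{L^2(a,b)}\le(b-a)^{1/2}\|h'\|_{L^2(a,b)}$, hence $|Qh(x)|\le(b-a)^{1/2}\|h'\|_{L^2(a,b)}$ pointwise; squaring, integrating in $x$, and noting $h'=\partial_x^2 f$ yields the one-dimensional bound, and integration in $y$ gives \eqref{ineq2.5-6}. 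For \eqref{ineq2.5} I would expand by Taylor with integral remainder about $c_0$, $g(x)=g(c_0)+g'(c_0)(x-c_0)+\int_{c_0}^x(x-s)g''(s)\,ds$. Since $P$ kills the affine part $g(c_0)+g'(c_0)(x-c_0)$, the quantity $Pg$ reduces to a sum of three linear functionals of $g''$ (the remainder itself, its average over $(a,b)$, and the $\phi$-weighted moment of $\int_{c_0}^t g''$), each bounded pointwise in $x$ by $C(b-a,\phi)\|g''\|_{L^2(a,b)}$ via Cauchy--Schwarz. Thus $\|Pg\|_{L^2(a,b)}\le C\|g''\|_{L^2(a,b)}$, and integrating over $y$ gives \eqref{ineq2.5}.

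The estimate \eqref{ineq2.6} is different in character: here I would \emph{not} invoke any cancellation, but only the boundedness of $P$ on $L^2$, applied to $w=\partial_y^2 f$. The obstacle is that the term $\int_a^b f_{yyx}\phi\,dx=\int_a^b w_x\phi\,dt$ involves a mixed third derivative that is not assumed to lie in $L^2_{loc}$. This is resolved by integrating by parts: since $\phi\in C^\infty$ has $\supp\phi\subseteq[a,b]$, $\phi$ and all its derivatives vanish at $a$ and $b$, so $\int_a^b w_x\phi\,dt=-\int_a^b w\phi'\,dt$, which is well defined and satisfies $\big|\int_a^b w\phi'\,dt\big|\le\|\phi'\|_{L^2(a,b)}\|w\|_{L^2(a,b)}$. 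Combining this with $\big|\frac1{b-a}\int_a^b w\,dt\big|\le(b-a)^{-1/2}\|w\|_{L^2(a,b)}$ and the finiteness of $\|x-c_0\|_{L^2(a,b)}$, the triangle inequality yields $\|Pw\|_{L^2(a,b)}\le C\|w\|_{L^2(a,b)}$ with $C=C(b-a,\phi)$; integrating over $y$ produces \eqref{ineq2.6}. I expect this integration-by-parts step, forced by the low-regularity hypothesis $(\partial_x^2 f,\partial_y^2 f)\in L^2_{loc}$ rather than full $H^2$, to be the main technical point, the remaining arguments being standard Poincar\'e/Bramble--Hilbert-type estimates.
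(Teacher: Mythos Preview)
Your proposal is correct and follows essentially the same route as the paper. The paper proves \eqref{ineq2.5-6} via exactly the identity you wrote, packaged there as a separate $\phi$-weighted Poincar\'e lemma, and handles \eqref{ineq2.6} precisely as you do: integrate by parts to rewrite $\int_a^b f_{yyx}\phi\,dx=-\int_a^b f_{yy}\phi_x\,dx$, bound each of the three pieces of $Pw$ by Cauchy--Schwarz, and conclude by the triangle inequality. The only difference is in \eqref{ineq2.5}: instead of your direct Taylor-remainder bound, the paper iterates Poincar\'e, noting that $Pg$ has zero mean over $(a,b)$ (since $\int_a^b(x-c_0)\,dx=0$) and that $(Pg)'=Qg'$, so the standard Poincar\'e inequality gives $\|Pg\|_{L^2(a,b)}\le C\|Qg'\|_{L^2(a,b)}$, after which the $\phi$-weighted Poincar\'e applied to $g'$ yields $\|Qg'\|_{L^2(a,b)}\le C\|g''\|_{L^2(a,b)}$. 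This two-step reduction is equivalent to, and arguably a bit cleaner than, your Taylor-expansion argument, but both are standard and yield the same constant dependence on $b-a$ and $\phi$.
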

\begin{proof}

First, let us see the following obvious Poincaré inequality generalization.
\begin{lemma}\label{lemma2.7}
Let $a<b$ and $\phi$ be a non negative continuous function on $[a,b]$ such that $\int
\phi=1$. Then, for all $f\in L^p[a,b]$ with $f'\in
L^p[a,b]$, $$\left\|f-\int_a^b f\phi\, dx \right\|_{L^p[a,b]}\le
C\|f'\|_{L^p[a,b]},$$ where $C$ depends only on $[a,b]$ and $p$.
\end{lemma}
\begin{proof}
For $x\in [a,b]$, $$ \left|f(x)-\int_a^b f(\xi)\phi(\xi)\, d\xi \right|= \left|\int_a^b
\int_\xi^x f'(s)\, ds \phi(\xi)\, d\xi\right|\le \|f'\|_{L^1[a,b]}. $$
The lemma follows immediately from this inequality.
\end{proof}
By Poincare's inequality and lemma above,
\begin{multline*}
\int_a^b \left|f(x,y)-\frac1{b-a}\int_a^b f\  -\left(x-\frac {a+b}2
  \right)\int_a^b f_x\phi \,dx , dx \right|^2\, dx\le \\
\le C^2 \int_a^b \left|f_x(x,y)-\int_a^b f_x\phi \, dx \right|^2\,
dx\le \\\le C^2\int_a^b |\partial_x^2 f(x,y)|^2\,
dx.
\end{multline*}
This inequality shows \eqref{ineq2.5}.\par \eqref{ineq2.5-6} is an
immediate consequenece of Lemma \ref{lemma2.7}. Now we shall prove
\eqref{ineq2.6}. By the Cauchy-Schwarz inequality we have that 
\begin{equation}\label{ineq2.10-1}
\left|\frac1{b-a}\int_a^b f_{yy}\, dx\right|^2\le \frac1{b-a}\int_a^b
|f_{yy}|^2\, dx. $$ Therefore
$$ \left\|\frac1{b-a}\int_a^b f_{yy}\, dx\right \|_{L^2(\Omega)}\le
\|\partial^2_y f\|_{L^2(\Omega)}.
\end{equation}
Additionally, observe that
\begin{equation}\label{ineq2.11}
 \left|\int_a^b f_{xyy}(x,y) \phi(x)\, dx \right|
=\left|\int_a^b f_{yy}(x,y) \phi_x(x)\, dx \right|\le
\|f_{yy}(\cdot,y)\|_{L^2[a,b]}\|\phi_x\|_{L^2[a,b]}.
\end{equation}
\eqref{ineq2.10-1}, \eqref{ineq2.11} and the triangle inequality imply
\eqref{ineq2.6}.
\end{proof}
 
\begin{lemma}\label{lemma2.6}
 Let $\Omega=(a,b)\times(c,d) $. There exists an extension operator
 $E: X^0(\Omega) \to  X^0$,
  i.e., there exists a bounded linear operator $E$ from $ X^0(\Omega)$
  to $ X^0$ such that, for any $u\in  X^0(\Omega)$, $Eu=u$ in $\Omega$, $\|Eu\|_{L^2}\le C\|u\|_{L^2(\Omega)}$ and $\|Eu\|_{ X^0}\le
  C\|u\|_{ X^0(\Omega)}$, where $C$ depends only on $\Omega
$.
\end{lemma}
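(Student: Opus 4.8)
The plan is to build $E$ by reflecting across the four sides of $\Omega=(a,b)\times(c,d)$, but to treat the two variables very differently, because the norm of $X^0$ is local in $y$ and genuinely nonlocal in $x$ (through $\partial_x^{-1}$ and $\partial_x^{-2}$). After a translation and dilation I may assume $\Omega$ is a fixed model rectangle, since the constant is allowed to depend on $\Omega$. Throughout I work with a representative $g\in X^0$ with $g|_\Omega=u$ and arrange every operation so that it samples $g$ only inside $\Omega$ (reflections send interior values outward); then $Eu$ depends linearly on $u$ alone, and bounding $\|Eu\|_{X^0}$ by $\|g\|_{X^0}$ and taking the infimum over such $g$ yields the bound by $\|u\|_{X^0(\Omega)}$. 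The same construction will obviously respect the $L^2$ estimate $\|Eu\|_{L^2}\le C\|u\|_{L^2(\Omega)}$.

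\emph{Extension in $y$.} First I extend to the strip $(a,b)\times\re$ by a Stein-type reflection across $y=c$ and $y=d$ followed by a smooth cutoff in $y$, producing $\bar u$ with compact $y$-support and $\bar u=u$ on $\Omega$. The key point is that these operations act only in $y$ and hence commute with $\partial_x$, $\partial_x^{-1}$ and $\partial_x^{-2}$, while $\partial_y^2$ is even under reflection. Therefore, on the reflected slabs, $\partial_x^{-1}\bar u_{yy}$ and $\partial_x^{-2}\bar u_{yy}$ are exactly the reflections of $\partial_x^{-1}u_{yy}$ and $\partial_x^{-2}u_{yy}$, so a reflection of high enough order (coefficients chosen so that the traces of $\partial_y^j$ match across $y=c,d$, which is what keeps $\bar u_{yy}$ from acquiring a $\delta(y-c)$ that $\partial_x^{-1}$ cannot absorb) keeps all four quantities $\bar u,\ \partial_x\bar u,\ \partial_x^{-1}\bar u_{yy},\ \partial_x^{-2}\bar u_{yy}$ in $L^2$ of the strip, with norms controlled by $\|u\|_{X^0(\Omega)}$. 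This direction is essentially free precisely because the nonlocal operators live in the other variable.

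\emph{Extension in $x$.} Now I extend $\bar u$ from the strip to $\re^2$ across $x=a,b$. Here reflection no longer commutes with $\partial_x^{-1}$, and this is the heart of the matter. I exploit instead that a scaled reflection transforms simply under the $x$-primitive: writing the extension for $x>b$ as a superposition $\sum_k\lambda_k\,\bar u\big(b-k(x-b),\cdot\big)$ with $\lambda_k$ rapidly decreasing (so $\sum_k|\lambda_k|\,k^m<\infty$ for every $m$) and chosen so that the $x$-derivatives match at $x=b$, one checks that applying $\partial_x^{-1}$ or $\partial_x^{-2}$ to such a sum again produces a superposition of scaled reflections, now of $\partial_x^{-1}\bar u$ or $\partial_x^{-2}\bar u$ and with coefficients weighted by $1/k$ and $1/k^2$. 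This transfer identity lets me bound the bulk of $\|\partial_x^{-1}(Eu)_{yy}\|_{L^2}$ and $\|\partial_x^{-2}(Eu)_{yy}\|_{L^2}$ by the corresponding strip norms from the previous step, while the matching at $x=a,b$ gives $\partial_x Eu\in L^2$; a smooth $x$-cutoff then makes $Eu$ compactly supported.

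\emph{Low-frequency correction.} The one obstruction that survives is the behaviour of $\widehat{Eu}$ at $\xi=0$: since $Eu$ is now compactly supported in $x$, square-integrability of $\partial_x^{-1}(Eu)_{yy}$ and $\partial_x^{-2}(Eu)_{yy}$ is equivalent to the vanishing, for a.e.\ $y$, of the zeroth and first $x$-moments of $Eu$ (equivalently of $(Eu)_{yy}$), and both the reflection and the cutoff spoil these moments. I restore them by subtracting $c_0(y)\psi_0(x)+c_1(y)\psi_1(x)$, where $\psi_0,\psi_1$ are fixed smooth functions supported away from $[a,b]$ (so that $Eu=u$ on $\Omega$ is preserved) with $\det\big(\int x^{j}\psi_i\big)\neq0$, and $c_0,c_1$ solve the resulting $2\times2$ system that cancels the two moments; this is exactly the affine-in-$x$ bookkeeping already used in Lemma \ref{lemma2.5}. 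The moments, hence $c_0,c_1$ and their $y$-derivatives, are controlled in $L^2(\re)$ by the compactly supported data, hence by $\|u\|_{X^0(\Omega)}$. Assembling the three steps gives a bounded linear $E$ with $Eu=u$ on $\Omega$ and the two asserted estimates. The main obstacle is concentrated in the last two steps: it is the failure of $x$-reflection and $x$-truncation to commute with $\partial_x^{-1},\partial_x^{-2}$, and the moment correction is precisely the device that repairs it.
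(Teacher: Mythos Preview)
Your sketch is essentially correct but takes a different route from the paper, and it is worth recording the comparison. The paper does \emph{not} work directly with $u$. It begins by writing $u=\partial_x^2 f$ for some $f\in S$, subtracts from $f$ its affine-in-$x$ part (this is precisely what Lemma~\ref{lemma2.5} is set up for) to obtain $f_0$ with $\|f_0\|_{L^2(\Omega)}$, $\|\partial_x f_0\|_{L^2(\Omega)}$ and $\|(f_0)_{yy}\|_{L^2(\Omega)}$ all controlled by the $X^0(\Omega)$-data of $u$; then it performs an ordinary fourth-order reflection of $f_0$ in $x$ and in $y$ to a $C^3$ function $f_2$ on a tripled rectangle, multiplies by a cutoff $\eta$, and sets $Eu=\partial_x^2(\eta f_2)$. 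The point of passing to the second $x$-primitive at the outset is that the nonlocal pieces $\partial_x^{-1}(Eu)_{yy}$ and $\partial_x^{-2}(Eu)_{yy}$ become $\partial_x(\eta f_2)_{yy}$ and $(\eta f_2)_{yy}$, i.e.\ purely local derivatives of $f_2$, so reflection and cutoff raise no difficulty and no separate moment correction is needed---it is already built into the affine normalisation of $f_0$.

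Your organisation is the mirror image: you keep working with $u$, handle the $y$-direction first (fine, since the nonlocal operators live in $x$), then reflect in $x$ using an antiderivative ``transfer identity'', and finally repair the zeroth and first $x$-moments by hand. The moment subtraction you do at the end is exactly the affine subtraction the paper does at the start; your transfer identity is the pointwise statement that a second antiderivative of a scaled reflection is a scaled reflection of a second antiderivative, which is again the paper's device of working with $f_0$, only invoked later and piecewise. What the paper's ordering buys is that one never has to discuss how the Fourier multipliers $\partial_x^{-1},\partial_x^{-2}$ interact with reflection and truncation---the step where your write-up is sketchiest---because by the time those operations are performed everything is local. What your ordering buys is a cleaner conceptual separation of the easy $y$-extension from the genuinely delicate $x$-extension.
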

\begin{proof}
Let $u\in  X_0(\Omega)$. Without loss generality, we can suposse
that $u=\partial_x^2 f$ in $\Omega$, for some $f\in S(\re^2)$ with
$\|\partial_x^2 f\|_{ X^0}\le 2\|u\|_{ X^0(\Omega)}$. Let us take $f_0=
f-\frac1{b-a} \int_a^b f\, dx -\left(x-\frac {a+b}2 \right)\int_a^b
f_x\phi \, dx$. It is obviuos that $u=\partial_x^2 f_0$ in $\Omega$.
Now consider $f_1$ defined on $[2a-b, 2b-a]\times [c,d]$ by $$
f_1(x,y)= \begin{cases} f_0(x,y) & \text{ if } x\in [a,b]\\
\sum_{i=1}^4a_i f_0(\frac {i+1}ib -\frac 1i x,y)  & \text{ if } x\in [b,2b-a]\\
\sum_{i=1}^4a_i f_0(\frac {i+1}ia -\frac 1i x,y)   &\text{ if } x\in [2a-b,a],
\end{cases} 
$$  
where 
$$ 
\begin{aligned}
a_1 +a_2 +a_3+a_4&=1\\ 
a_1 +\frac {a_2}2 +\frac {a_3}3+\frac {a_4} 4&=-1\\
a_1 +\frac {a_2}4 +\frac {a_3} 9+\frac {a_4} {16}&=1\\
a_1 +\frac {a_2}8 +\frac {a_3} {27}+\frac {a_4} {64}&=-1
\end{aligned}
$$ Clearly $ f_1$ is a $C^3 $ function on $[2a-b, 2b-a]\times [c,d]$
and satisfy
\begin{equation}\label{ineq2.8}
\| \partial^\alpha f_1\|_{L^2([2a-b,
    2b-a]\times [c,d])}\le C\| \partial^\alpha f_0\|_{L^2(\Omega)},
\end{equation} 
for all $\alpha \in \nat^2$ with $|\alpha |\le 3$.
In the same way, from $f_1$, we can define a $C^3$ function $f_2$ on $\tilde \Omega
=[2a-b, 2b-a]\times [2c-d,2d-c]$ such that 
\begin{equation}\label{ineq2.9}
\| \partial^\alpha f_2\|_{L^2(\tilde \Omega)}\le 9\| \partial^\alpha f_0\|_{L^2(\Omega)},
\end{equation} 
for all $\alpha \in \nat^2$ with $|\alpha |\le 3$.
Now, let $\eta$ a $C^\infty$ function in $\re^2$ such that $\eta\equiv 1$ in $
\Omega$ and $0$ out of $\tilde \Omega$, and let $Eu= \partial_x^2(\eta f_2)$
in $\tilde \Omega$ and $0$ in $\re^2-\tilde \Omega$. From
\eqref{ineq2.9} and Lemma \ref{lemma2.5} follows that 
$Eu=u$ in $\Omega$, $\|Eu\|_{L^2}\le C\|u\|_{L^2(\Omega)}$ and $\|Eu\|_{X^0}\le
  C\|u\|_{X^0(\Omega)}$, where $C$ depends only on $\Omega
$ and $\phi$.
\end{proof}
\begin{clly}
If $\Omega =(a,b)\times (c,d)$ then $X^\frac 12(\Omega)=[L^2(\Omega),
  X^0(\Omega)]_{[\frac 12]}$.
 \end{clly}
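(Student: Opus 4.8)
The plan is to read this corollary as the standard statement that complex interpolation commutes with passage to a \emph{retract}, and to produce the retraction--coretraction pair from the material already at hand. Write $A_0=X^0$, $A_1=L^2$ and $B_0=X^0(\Omega)$, $B_1=L^2(\Omega)$; since $(X_0,L^2)$ is a compatible couple and $\Omega$ is fixed, $(B_0,B_1)$ is also a compatible couple (both embed in $L^2(\Omega)$). The first step is to introduce the restriction map $Rg=g|_\Omega$. Directly from the definition of the norms on $X^0(\Omega)$ and $L^2(\Omega)$ as infima of extension norms, $R$ is a contraction $A_i\to B_i$ for $i=0,1$ (e.g. $\|Rg\|_{X^0(\Omega)}=\inf_{h|_\Omega=g|_\Omega}\|h\|_{X^0}\le\|g\|_{X^0}$); in particular it is a morphism of couples.

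The second step is to promote the operator $E$ of Lemma \ref{lemma2.6} to a coretraction. Lemma \ref{lemma2.6} gives $E\colon X^0(\Omega)\to X^0$ bounded together with the bound $\|Eu\|_{L^2}\le C\|u\|_{L^2(\Omega)}$; since $C_c^\infty(\Omega)\subset X^0(\Omega)$ is dense in $L^2(\Omega)$ and $E$ is $L^2$-bounded on this dense set, $E$ extends uniquely to a bounded operator $L^2(\Omega)\to L^2$, which I still denote $E$. Thus $E$ is a morphism of couples $B\to A$. Moreover $Eu=u$ in $\Omega$ means $REu=(Eu)|_\Omega=u$, first on $X^0(\Omega)$ and then on all of $L^2(\Omega)$ by continuity; hence $RE=\mathrm{Id}_B$. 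This exhibits the couple $(B_0,B_1)$ as a retract of $(A_0,A_1)$, with coretraction $E$ and retraction $R$.

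With the retract in place, the third step is a direct appeal to the abstract retract theorem for the complex method (see \cite{berghlofs}): for any retract one has $[B_0,B_1]_{[\frac12]}=R\big([A_0,A_1]_{[\frac12]}\big)$, with the norm of $[B_0,B_1]_{[\frac12]}$ equivalent to the quotient norm $\inf\{\|g\|_{[A_0,A_1]_{[\frac12]}}\colon Rg=f\}$. By the Theorem proved above, $[A_0,A_1]_{[\frac12]}=(X_0,L_2)_{[\frac12]}=X^\frac12$ (the ordering of the endpoints being immaterial at $\theta=\frac12$). Therefore $R\big([A_0,A_1]_{[\frac12]}\big)=\{g|_\Omega\colon g\in X^\frac12\}=X^\frac12(\Omega)$ as sets, and the induced quotient norm is exactly the infimum-of-extensions norm defining $\|\cdot\|_{X^\frac12(\Omega)}$. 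This yields $X^\frac12(\Omega)=[L^2(\Omega),X^0(\Omega)]_{[\frac12]}$ with equivalent norms.

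The main obstacle is the bookkeeping around $E$ rather than the interpolation itself: one must check that the single operator $E$ is simultaneously bounded at \emph{both} endpoints and that $RE=\mathrm{Id}$ holds on the whole space $L^2(\Omega)$ (not merely on $X^0(\Omega)$), so that $E$ and $R$ genuinely form a coretraction--retraction pair; the density of $X^0(\Omega)$ in $L^2(\Omega)$ is what makes this work. A secondary point to verify is that the abstract quotient norm delivered by the retract theorem coincides, up to constants, with the infimum-of-extensions norm used in the Definition of $X^\frac12(\Omega)$, which it does by construction of $R$.
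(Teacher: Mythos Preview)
Your argument is exactly the paper's: exhibit the restriction $R$ and the extension $E$ of Lemma~\ref{lemma2.6} as a retraction--coretraction pair between the couples $(X^0,L^2)$ and $(X^0(\Omega),L^2(\Omega))$, then invoke the abstract retract theorem for complex interpolation (the paper cites Theorem~1.2.4 in \cite{triebel}, you cite \cite{berghlofs}) together with $X^{\frac12}=(X_0,L^2)_{[\frac12]}$.

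One small technical slip: the claim $C_c^\infty(\Omega)\subset X^0(\Omega)$ is not automatic, since the zero extension of $\phi\in C_c^\infty(\Omega)$ need not satisfy $\partial_x^{-1}\phi_{yy}\in L^2$ (the Fourier side has a $\xi^{-1}$ singularity unless $\int\phi(x,y)\,dx\equiv 0$). The density you need is still true---e.g.\ restrictions of $\partial_x^2\psi$, $\psi\in S$, lie in $X^0(\Omega)$ and are dense in $L^2(\Omega)$---so the extension of $E$ to $L^2(\Omega)$ and the identity $RE=\mathrm{Id}$ go through as you wrote.
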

\begin{proof} 
  It is enough to observe that $E$ defined in Lemma \ref{lemma2.6}
  can be see as a coretract of the restriction operator from $(X^0,L^2)$
  to $(X^0(\Omega),L^2(\Omega))$. Then, the corollary follows from
  Theorem 1.2.4 in \cite{triebel}
\end{proof} 
\begin{thm}\label{thm2.12}
Suposse that $\{\Omega_i\}_{i\in\nat}$ is a cover of $\re^2$, where
each $\Omega_i$ is an open cube with edges parallel to the coordinate
axis and side-length $R$, and such that each point in $\re^2$ is
contained in at most 3 $\Omega_i$'s. Then
\begin{equation}\label{ineq2.10}
\sum_{i=0}^\infty \|u\|_{X(\Omega_i)}^2 \le C\|u\|_{X}^2,
\end{equation}
for all $u\in X^\frac 12$.
\end{thm}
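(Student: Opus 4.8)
The plan is to deduce the inequality for $X=X^\frac12$ from the two endpoint cases $X=L^2$ and $X=X^0$ by interpolation, so I would first record those and then interpolate. Consider the linear restriction map $T\colon u\mapsto (u|_{\Omega_i})_{i\in\nat}$, and for a space $A$ write $\ell^2(A(\Omega_i))$ for the sequences $(v_i)_i$ with $v_i\in A(\Omega_i)$ and $\sum_i\|v_i\|_{A(\Omega_i)}^2<\infty$. Then \eqref{ineq2.10} for a given $X$ is precisely the boundedness of $T\colon X\to\ell^2(X(\Omega_i))$. Since the complex method commutes with $\ell^2$-sums, $\big[\ell^2(L^2(\Omega_i)),\ell^2(X^0(\Omega_i))\big]_{[\frac12]}=\ell^2\big([L^2(\Omega_i),X^0(\Omega_i)]_{[\frac12]}\big)=\ell^2\big(X^\frac12(\Omega_i)\big)$, the last identity being the preceding Corollary; and $X^\frac12=(X_0,L^2)_{[\frac12]}$ by the Theorem above. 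Hence, once $T\colon L^2\to\ell^2(L^2(\Omega_i))$ and $T\colon X^0\to\ell^2(X^0(\Omega_i))$ are bounded, viewing $T$ as a map of the couple $(X^0,L^2)$ into the couple $(\ell^2(X^0(\Omega_i)),\ell^2(L^2(\Omega_i)))$ and interpolating yields $T\colon X^\frac12\to\ell^2(X^\frac12(\Omega_i))$, which is exactly \eqref{ineq2.10}. Thus it suffices to treat the two endpoints. The case $X=L^2$ is immediate: extending $u|_{\Omega_i}$ by zero shows $\|u\|_{L^2(\Omega_i)}^2=\int_{\Omega_i}|u|^2$, and since each point lies in at most three $\Omega_i$ we get $\sum_i\int_{\Omega_i}|u|^2\le 3\|u\|_{L^2}^2$.

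The case $X=X^0$ is the heart of the matter, and the nonlocal operators $\partial_x^{-1},\partial_x^{-2}$ are the obstacle. I would bound $\|u\|_{X^0(\Omega_i)}$ by a genuinely local energy. Fix $\Omega_i=(a,b)\times(c,d)$ and represent $u=\partial_x^2 f$ near $\Omega_i$ as in the proof of Lemma \ref{lemma2.6}. The decisive observation is that this representation turns the nonlocal $X^0$-terms into local differential ones: $\partial_x^{-1}u_{yy}=\partial_x f_{yy}$ and $\partial_x^{-2}u_{yy}=f_{yy}$. Consequently the Lemma \ref{lemma2.6} extension $E_iu=\partial_x^2(\eta f_2)$, supported in the fixed dilate $\tilde\Omega_i$, satisfies $\partial_x E_iu=\partial_x^3(\eta f_2)$, $\partial_x^{-1}(E_iu)_{yy}=\partial_x\partial_y^2(\eta f_2)$ and $\partial_x^{-2}(E_iu)_{yy}=\partial_y^2(\eta f_2)$, all of which are local. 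Expanding these by the Leibniz rule with $\eta$ and its derivatives bounded, and then using the reflection bounds \eqref{ineq2.8}, \eqref{ineq2.9} together with Lemma \ref{lemma2.5}, I would obtain
\[
\|E_iu\|_{X^0}^2 \le C\int_{\Omega_i}\big(|u|^2+|\partial_x u|^2+\lambda|\partial_x^{-1}u_{yy}|^2+\mu|\partial_x^{-2}u_{yy}|^2\big)\,dx\,dy.
\]
Taking the infimum over extensions gives the same bound for $\|u\|_{X^0(\Omega_i)}^2$. Finally, a fixed dilation of a cover with overlap multiplicity $3$ still has overlap bounded by a constant $N$ depending only on the dilation factor, so summing over $i$ yields $\sum_i\|u\|_{X^0(\Omega_i)}^2\le CN\|u\|_{X^0}^2$, completing the $X^0$ endpoint.

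I expect the main difficulty to be the derivative bookkeeping in the $\mu$-term of the $X^0$ estimate. The cutoff $\eta$ must be taken depending on $y$ as well as $x$ (otherwise $E_iu$ would fail to lie in $X^0$ across the outer $y$-edges of $\tilde\Omega_i$), so differentiating $\eta f_2$ twice in $y$ produces the lower-order factors $\eta_y\,\partial_y f_2$ and $\eta_{yy}f_2$. These have to be reabsorbed into $\int_{\Omega_i}\mu|\partial_x^{-2}u_{yy}|^2$ and $\int_{\Omega_i}|u|^2$, which is precisely what the three estimates \eqref{ineq2.5}, \eqref{ineq2.5-6} and \eqref{ineq2.6} of Lemma \ref{lemma2.5} are designed to supply; the first-order $y$-derivative $\partial_y f_0$ is the delicate one and would be controlled by combining \eqref{ineq2.6} with a Poincaré inequality in $y$ on $[c,d]$. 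Granting this, the remaining ingredients — the $L^2$ endpoint and the interpolation step — are routine given the preceding Corollary and the standard behaviour of the complex method on $\ell^2$-valued spaces \cite{triebel,berghlofs}.
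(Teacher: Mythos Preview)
Your approach is essentially the same as the paper's: establish the endpoint bounds for $T\colon L^2\to\ell^2(L^2(\Omega_i))$ and $T\colon X^0\to\ell^2(X^0(\Omega_i))$, the latter via the uniform extension estimate $\|E_iu\|_{X^0}^2\le C\int_{\Omega_i}(|u|^2+|\partial_x u|^2+\lambda|\partial_x^{-1}u_{yy}|^2+\mu|\partial_x^{-2}u_{yy}|^2)$ coming from Lemma~\ref{lemma2.6}, and then interpolate using the Corollary and Theorem 1.18.1 in \cite{triebel}. One small slip: once you have the integrand on the right supported on $\Omega_i$ itself, summing uses only the original overlap $3$, so the dilated-cover remark is unnecessary.
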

\begin{proof}
  Proceeding as in the proof of Lemma \ref{lemma2.6} we can show
  that $$\|E_i u\|_{X^0}^2 \le C\int_{\Omega_i} u^2 + \partial_x u^2
  +\lambda \partial_x^{-1}\partial_y^2u^2 +
  \mu \partial_x^{-2}\partial_y^2u^2\,dx ,$$ where $E_i$ is the extension
  operator from $X^0(\Omega_i)$ to $X^0$. It is easy to check that $C$
  depends only on length of $x$-side of $\Omega_i$. Then $C$ is
  independent of $i$. Since $$ \|u\|_{X^0(\Omega_i)}\le \|E_i
  u\|_{X^0},$$ for all $i$, we get $$ \sum_{i=0}^\infty
  \|u\|_{X^0(\Omega_i)}^2 \le C\sum_{i=0}^\infty\int_{\Omega_i} u^2
  + \partial_x u^2 +\lambda \partial_x^{-1}\partial_y^2u^2 +
  \mu \partial_x^{-2}\partial_y^2u^2\, dxdy\le 3C
  \|u\|_{X^0}^2.$$ Also, it is obvious that $$ \sum_{i=0}^\infty
  \|u\|_{L^2(\Omega_i)}^2 \le 3 \|u\|_{L^2}^2.$$ Then the operator $u
  \mapsto (u_{\Omega_i})_{i\in \nat}$ ($u_{\Omega_i}$ is the
  restriction of $u$ to ${\Omega_i}$) is continuous from $L^2$ to $
  \ell^2 (L^2(\Omega_i))$ and from $X^0$ to $ \ell^2 (X^0(\Omega_i))$. By
  Theorem 1.18.1 in \cite{triebel}, we have that the operator $u \mapsto
  (u_{\Omega_i})_{i\in \nat}$ is continuous from $X^\frac12$ to $ \ell^2
  (X^\frac12(\Omega_i))$. Thence we obtain \eqref{ineq2.10} for
  $X=X^\frac12$. The proof of \eqref{ineq2.10} with $X=\tilde
  X^\frac12$ is completely analogous.
\end{proof}
\begin{lemma}\label{lemmaEC}
The embedding $X^\frac12\hookrightarrow L_{loc}^p({\re}^2)$ is
compact, if $$\begin{cases} 0\le p<4 &\text{ if } \lambda >0\\  0\le
  p<\frac43  &\text{ if } \lambda= 0.
\end{cases}
$$
In other words, if $(u_n)$ is a bounded sequence in $X^\frac12$ and
$R>0$, there exists a subsequence $(u_{n_k})$ of $(u_n)$ which
converges strongly to $u$ in $L^p (B_R)$.
 \end{lemma}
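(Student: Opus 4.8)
The plan is to split the argument into a base compactness in $L^2$ and an interpolation step, so that the only analytic input beyond Lemma \ref{lemma7} is the compactness of one fractional Sobolev embedding. First I would establish the special case $p=2$: every sequence $(u_n)$ bounded in $X^\frac12$ has a subsequence converging strongly in $L^2(B_R)$. Granting this, the general case is immediate. Indeed, by Lemma \ref{lemma7} the embedding $X^\frac12\hookrightarrow L^{r}$ is continuous for $2\le r\le r_*$, where $r_*=4$ if $\lambda>0$ and $r_*=\frac{10}3$ if $\lambda=0$; hence $(u_{n_k})$ is bounded in $L^{r_*}(B_R)$. For $2\le p<r_*$ I choose $\theta\in[0,1)$ with $\frac1p=\frac{1-\theta}2+\frac{\theta}{r_*}$ and use $\|u_{n_k}-u\|_{L^p(B_R)}\le\|u_{n_k}-u\|_{L^2(B_R)}^{1-\theta}\,\|u_{n_k}-u\|_{L^{r_*}(B_R)}^{\theta}$; the last factor is bounded and the first tends to $0$, so $u_{n_k}\to u$ in $L^p(B_R)$, while for $0\le p\le2$ convergence follows from the $L^2(B_R)$ convergence since $B_R$ has finite measure. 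As $r_*=4$ when $\lambda>0$ and $r_*=\frac{10}3>\frac43$ when $\lambda=0$, this covers the claimed ranges.

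For the base case I would invoke the Kolmogorov--Riesz (Fr\'echet--Kolmogorov) criterion in its local form: a family bounded in $L^2(\re^2)$ whose $L^2$-modulus of continuity of translations tends to $0$ uniformly is precompact in $L^2(B_R)$ for each $R$. Boundedness in $L^2(\re^2)$ is clear, so the point is to control translations, which I do on the Fourier side by an arithmetic--geometric mean estimate that converts the $\xi$-degenerate $y$-regularity of $X^\frac12$ into genuine fractional regularity in $\eta$. When $\lambda>0$, the bound $|\xi|+\lambda|\xi|^{-1}\eta^2\ge2\sqrt\lambda\,|\eta|$ gives $\int_{\re^2}|\eta|\,|\hat f|^2\,d\xi d\eta\le C\|f\|_{X^\frac12}^2$, while the $D_x^{\frac12}$ term gives $\int_{\re^2}|\xi|\,|\hat f|^2\le\|f\|_{X^\frac12}^2$; using $|e^{i\tau}-1|^2\le2|\tau|$ one obtains, for a shift by $(h_1,h_2)$, $\|f(\cdot+h_1,\cdot+h_2)-f\|_{L^2}^2\le C\,(|h_1|+|h_2|)\,\|f\|_{X^\frac12}^2$. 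When $\lambda=0$, the analogous bound $|\xi|+\mu|\xi|^{-2}\eta^2\ge c\,\mu^{1/3}|\eta|^{2/3}$ yields $\int_{\re^2}|\eta|^{2/3}|\hat f|^2\le C\|f\|_{X^\frac12}^2$, and $|e^{i\tau}-1|^2\le C|\tau|^{2/3}$ gives $\|f(\cdot+h_1,\cdot+h_2)-f\|_{L^2}^2\le C\,(|h_1|+|h_2|^{2/3})\,\|f\|_{X^\frac12}^2$. In either case the modulus of continuity tends to $0$ as $(h_1,h_2)\to0$ uniformly over the bounded sequence, so the criterion applies and a subsequence converges in $L^2(B_R)$.

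The hard part will be exactly this last estimate: the space $X^\frac12$ only sees the $y$-variable through the anisotropic quantities $D_x^{-\frac12}f_y$ (resp. $\partial_x^{-1}f_y$), which degenerate as $|\xi|\to0$, so a direct translation estimate in $y$ is not available. The arithmetic--geometric mean inequality, pairing this degenerate term against the genuine $x$-regularity $D_x^{\frac12}f$, is what recovers a coercive (fractional) power of $|\eta|$ and hence the equicontinuity of $y$-translations; I expect verifying this to be the only nonroutine step, the remainder being the standard interpolation and compactness machinery. Equivalently, these weight inequalities show $X^\frac12\hookrightarrow H^{\frac12}(\re^2)$ when $\lambda>0$, and $X^\frac12$ into the anisotropic space with $\frac12$ derivative in $x$ and $\frac13$ derivative in $y$ when $\lambda=0$, after which one may simply quote the classical Rellich--Kondrachov theorem to obtain compactness into $L^2(B_R)$.
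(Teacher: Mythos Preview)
Your proof is correct and takes a genuinely different route from the paper's. The paper first uses the extension operator $E_R$ built in Lemma~\ref{lemma2.6} to reduce to functions supported in a fixed cube $\Omega_{3R}$, passes to a weakly convergent subsequence with limit assumed zero, and then splits frequency space into $Q_1=\{|\xi|\le\rho,\ |\eta|\le\rho\}$, $Q_2=\{|\xi|>\rho\}$, $Q_3=\{|\xi|\le\rho,\ |\eta|>\rho\}$: on $Q_2$ and $Q_3$ the $X^{\frac12}$ bound forces smallness for large $\rho$, while on the bounded box $Q_1$ they use the uniform pointwise bound $|\hat u_n|\le C$ (coming from the compact spatial support) together with $\hat u_n\to0$ pointwise and dominated convergence. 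Your argument replaces all of this by the single observation that the weighted AM--GM inequalities $|\xi|+\lambda|\xi|^{-1}\eta^2\ge 2\sqrt\lambda\,|\eta|$ and $\tfrac23|\xi|+\tfrac13\mu|\xi|^{-2}\eta^2\ge \mu^{1/3}|\eta|^{2/3}$ embed $X^{\frac12}$ continuously into $H^{1/2}$ (resp.\ into an anisotropic Sobolev space with $\tfrac12$ derivative in $x$ and $\tfrac13$ derivative in $y$), after which Rellich--Kondrachov or Kolmogorov--Riesz gives $L^2_{loc}$ compactness immediately. Your route is shorter and entirely bypasses the extension-operator and interpolation-space machinery of Section~\ref{prel}; the paper's route is more self-contained in that it does not invoke an external compactness theorem, but it pays for this with the compact-support reduction. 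The final interpolation step from $L^2_{loc}$ to $L^p_{loc}$ via Lemma~\ref{lemma7} is essentially the same in both arguments.
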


\begin{proof}
  We prove the lemma when $\lambda >0$, the proof when $\lambda=0$
  is just to make some obvious modifications. Suppose that
  $(u_n)_{n=1}^{\infty}$ is a bounded sequence in $X^\frac 12$.  Let
  $\Omega_R$ be the cube with center at the origen and edges parallel
  to the coordinate axis and length $R$, and let $E_R$ the extension
  operator from $L^2( \Omega_R)$ to $L^2$ as in proof Lemma
  \ref{lemma2.6}. By interpolation, $E_R$ is a continuous operator from
  $X^\frac12(\Omega)$ to $X^\frac12$. Also, it is easy to observe that
  $E_R(u)$ is $0$ out of $\Omega_{3R}$, for all $u\in X^\frac12$,
  where $\Omega_{3R}$ is the cube with center at the origen and edges
  parallel to the coordinate axis and length $3R$. Because $u=E_R(u)$
  in $\Omega$, withuot loss of generality, we can assume that
  $u_n=E_R(u_n)$, for all $n$. Now, since $u_n$ is bounded in
  $X^\frac12$, we can also suposse that $u_n\rightharpoonup u$ in
  $X^\frac12$, and replacing, if necessary, $u_n$ by $u_n-u$, we can
  assume that $u=0$ too.\par Let
\begin{align*}
Q_1&=\{(\xi,\eta)\in{\re}^2/|\xi|\leq\rho,|\eta|\leq\rho\}\\
Q_2&=\{(\xi,\eta)\in{\re}^2/|\xi|>\rho\}\\
Q_3&=\{(\xi,\eta)\in{\re}^2/|\xi|<\rho,|\eta|>\rho\}
\end{align*}
Then  ${\re}^2=\bigcup_{i=1}^3Q_i$  and $Q_i\bigcap{Q}_j=\emptyset$,  $i\neq{j}$.
For $\rho>0$, there holds
\[\int_{\Omega_{3R}}|u_n(x,y)|^2dxdy=\int_{{\re}^2}|\widehat{u}_n(\xi,\eta)|^2d\xi{d}\eta=\sum_{i=1}^3\int_{Q_i}|\widehat{u}_n(\xi,\eta)|^2d\xi{d}\eta\]
It is clear that
\[\int_{Q_2}|\widehat{u}_n(\xi,\eta)|^2d\xi{d}\eta=\int_{Q_2}\frac{1}{|\xi|}|\widehat{D^{1/2}_x{u}_n}(\xi,\eta)|^2d\xi{d}\eta
\leq\frac{C}{\rho}\|D_x^{1/2}u_n\|_0^2,\]
and
\[\int_{Q_3}|\widehat{u}_n(\xi,\eta)|^2d\xi{d}\eta=\int_{Q_3}\frac{|\xi|}{|\eta|^2}|\widehat{D^\frac12_x\partial_yu}_n(\xi,\eta)|^2d\xi{d}\eta.
\]
Therefore, for any $\epsilon$, there exists $\rho>0$ large enough such that
\[\int_{Q_2}|\widehat{u}_n(\xi,\eta)|^2d\xi{d}\eta+\int_{Q_3}|\widehat{u}_n(\xi,\eta)|^2d\xi{d}\eta\leq\epsilon/2.\]
Since, by the fact that $u_n\rightharpoonup0$ in $L^2({\re}^2)$,
\[\lim_{n\rightarrow0}\widehat{u}_n(\xi,\eta)=\lim_{n\rightarrow0}\int_{\Omega_{3R}}u_n(x,y)e^{-i(x\xi+y\eta)}dxdy=0,\]
and $|\widehat{u}(\xi,\eta)|\leq \|u_n\|_{1}$, the Lebesgue dominated
convergence theorem guarantees that $$\int_{Q_1}
|\widehat{u}_n(\xi,\eta)|^2d\xi{d}\eta=0$$ as $n\rightarrow{\infty}$.
Hence $u_n\rightarrow{0}$ in $L_{loc}^2({\re}^2)$. By Lemma
\ref{lemma7}, $u_n\rightarrow{0}$ in $L_{loc}^p({\re}^2)$ if
$2\leq{p}< 4$.

\end{proof}

\begin{lemma}\label{lemmaeq9}
 If $(u_n)$ is bounded in $X^\frac12$ and
\begin{equation}\label{eq9}
\lim_{n\to\infty}\sup_{(x,y)\in{\re}^2}\int_{B(x,y;R)}|u_n|^2dxdy=0,
\end{equation}
as $n\to\infty$, then $u_n \to{0}$ in $L^p({\re}^2)$ for 
$$\begin{cases} 2< p<4 & \text{ if } \lambda >0 \\ 2< p<4/3 & \text{ if } \lambda =0
\end{cases}
. $$
\end{lemma}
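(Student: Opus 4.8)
The plan is to reduce everything to a single Lebesgue exponent by means of a localized Gagliardo--Nirenberg inequality, and then to combine it with the covering estimate of Theorem \ref{thm2.12} and the vanishing hypothesis \eqref{eq9}. I treat the case $\lambda>0$; the case $\lambda=0$, $\mu>0$ is handled identically after replacing the exponents below by those coming from part (b) of Lemma \ref{lemma7}. The decisive observation is that in \eqref{ineq2.0} the $L^2$ norm enters with a positive power while the two ``$X^\frac12$ seminorm'' factors $\|D_x^{-1/2}\partial_y f\|$ and $\|D_x^{1/2}f\|$ together enter with a power that equals exactly $2$ precisely when the Lebesgue exponent is $3$ (i.e. when the nonlinearity exponent in Lemma \ref{lemma7} is $1$). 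Specializing \eqref{ineq2.0} to this value gives, for every $g\in X^\frac12$,
\begin{equation*}
\|g\|_{L^3(\re^2)}^3\le C\,\|g\|_{L^2}\,\|D_x^{-1/2}\partial_y g\|^{1/2}\|D_x^{1/2}g\|^{3/2}\le C\,\|g\|_{L^2}\,\|g\|_{X^\frac12}^2 ,
\end{equation*}
which is the exponent at which the power $2$ matches exactly what Theorem \ref{thm2.12} controls.

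Next I would localize this inequality to each cube of a covering. Fix a cover $\{\Omega_i\}$ of $\re^2$ by cubes of side $R$ with at most triple overlap as in Theorem \ref{thm2.12}, and let $E_i$ be the extension operator of Lemma \ref{lemma2.6}, which is bounded from $L^2(\Omega_i)$ to $L^2$ and from $X^0(\Omega_i)$ to $X^0$ with constants depending only on $R$. By the corollary identifying $X^\frac12(\Omega_i)$ with $[L^2(\Omega_i),X^0(\Omega_i)]_{[\frac12]}$ and interpolation of operators, the same $E_i$ is bounded from $X^\frac12(\Omega_i)$ to $X^\frac12$, uniformly in $i$. Applying the displayed inequality to $g=E_iu$ and using $E_iu=u$ on $\Omega_i$ yields
\begin{equation*}
\|u\|_{L^3(\Omega_i)}^3\le \|E_iu\|_{L^3(\re^2)}^3\le C\,\|E_iu\|_{L^2}\,\|E_iu\|_{X^\frac12}^2\le C\,\|u\|_{L^2(\Omega_i)}\,\|u\|_{X^\frac12(\Omega_i)}^2 ,
\end{equation*}
with $C$ independent of $i$.

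Now I would sum over $i$. Since the $\Omega_i$ cover $\re^2$ one has $\|u\|_{L^3(\re^2)}^3\le\sum_i\|u\|_{L^3(\Omega_i)}^3$, so the previous bound together with Theorem \ref{thm2.12} gives
\begin{equation*}
\|u\|_{L^3(\re^2)}^3\le C\Big(\sup_i\|u\|_{L^2(\Omega_i)}\Big)\sum_i\|u\|_{X^\frac12(\Omega_i)}^2\le C\Big(\sup_i\|u\|_{L^2(\Omega_i)}\Big)\|u\|_{X^\frac12}^2 .
\end{equation*}
Each $\Omega_i$ is contained in a ball of radius $R$, hence $\sup_i\|u_n\|_{L^2(\Omega_i)}^2\le \sup_{(x,y)}\int_{B(x,y;R)}|u_n|^2\,dxdy$, which tends to $0$ by \eqref{eq9}; since $\|u_n\|_{X^\frac12}$ is bounded this shows $u_n\to 0$ in $L^3(\re^2)$. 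Finally the full range $2<p<4$ follows by interpolation of Lebesgue norms: $(u_n)$ is bounded in $L^2$ (trivially, as $\|\cdot\|_{L^2}\le\|\cdot\|_{X^\frac12}$) and in $L^4$ (by Lemma \ref{lemma7}), so writing $L^p$ as an interpolant of $L^2$ and $L^3$ when $2<p<3$, and of $L^3$ and $L^4$ when $3<p<4$, converts $u_n\to0$ in $L^3$ into $u_n\to0$ in $L^p$.

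The main obstacle is the localization step: one needs the extension operator to be bounded on $L^2$ and on $X^\frac12$ simultaneously and with a constant uniform in the cube, so that the constant $C$ in the summed inequality does not degenerate as the covering is refined or translated. This is exactly the payoff of the interpolation-space apparatus built in Lemmas \ref{lemma2.5}--\ref{lemma2.6} and Theorem \ref{thm2.12}; everything else is the classical Lions vanishing argument. For $\lambda=0$, $\mu>0$ the same scheme applies using \eqref{ineq2.01}, where the seminorm factors carry total power $2$ at the Lebesgue exponent $\tfrac{22}{9}$, which plays the role of $3$ above.
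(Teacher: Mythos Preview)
Your argument is correct and follows essentially the same route as the paper: both obtain the local inequality $\|u\|_{L^3(\Omega_i)}^3\le C\,\|u\|_{L^2(\Omega_i)}\,\|u\|_{X^{1/2}(\Omega_i)}^2$ (the paper via local H\"older interpolation between $L^2$ and $L^4$ together with Lemma~\ref{lemma7}, you via the global case $p=1$ of \eqref{ineq2.0} applied to the extension $E_iu$), then sum over the covering using Theorem~\ref{thm2.12}, deduce $u_n\to 0$ in $L^3$, and finish by interpolating with the bounded $L^2$ and $L^4$ norms. One small remark on the $\lambda=0$ case: tracing the proof of \eqref{ineq2.01} shows the exponent on $\|\partial_x^{-1}\partial_y f\|$ should be $p/2$ rather than the stated $3p/2$, so the Lebesgue exponent at which the seminorm powers sum to $2$ is $8/3$, not $22/9$; your scheme is unaffected.
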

\begin{proof}
  Suposse $\lambda>0$ ($\lambda=0$ follows in the same way). Let $2< s<
  4$ and let $\Omega_R$ be the cube with center at the origen, edges
  parallel to the coordinate axis and side-length $R$. Then, by Hölder
  inequality and Lemma \ref{lemma7}, we have that

\begin{align*}
\|u_n\|_{L^s{((x,y)+\Omega_R)}}&\leq\|u_n\|^{1-\vartheta}_{L^2_{((x,y)+\Omega_R)}}\|u\|^{\vartheta}_{L^{4}_{((x,y)+\Omega_R)}}\\
&\leq\|u_n\|^{1-\vartheta}_{L^2_{((x,y)+\Omega_R)}}\|u_n\|^{\vartheta}_{X^\frac12_{((x,y)+\Omega_R)}},
\end{align*}
 where $\vartheta =\frac{2(s-2)}{s}$. 
Choosing $s$ such that $\frac{\vartheta{s}}{2}=1$, i.e.,
$s=3$, there holds

\[\int_{(x,y)+\Omega_R)}|u_n|^{3}dxdy\leq{C}\|u_n\|_{L^2((x,y)+\Omega_R)}\|u_n\|^{2}_{X^\frac12_{((x,y)+\Omega_R)}},\]
Now, covering ${\re}^2$ by cubes with edges parallel to the coordinate
axis and side-length $R$ in such a way that each point of ${\re}^2$ is
contained in at most 3 of these cubes, by Theorem \ref{thm2.12}, we get 
$$
\int_{{\re}^2}|u_n|^{3}dxdy\leq C\sup_{(x,y)\in{\re}^2}\|u_n\|_{L^2((x,y)+\Omega_R)}\|u_n\|^{2}_{X^\frac12}
$$
Since $u_n$ is bounded in $X^\frac12$ and satisfies \eqref{eq9},
$u_n\to{0}$ in $L^{3}({\re}^2)$. Because $2<3<4$, the Hölder
inequality implies that $u_n\to{0}$ in $L^p({\re}^2)$, for all $2<p<4$.
\end{proof}
The following lemma gives us a minimax principle and is an immediate
consequence of Theorem 2.8 in \cite[pg. 41]{willem}
\begin{lemma}\label{existPSseq} 
Suppose $X$ is a Banach space and $\Phi\in{C}^1(X,{\re})$ satisfies the
following properties:
 \begin{enumerate}
\item $\Phi(0)=0$, and there exists $\rho>0$, such that $\Phi|_{\partial{B}_\rho(0)}\geq\alpha>0$.
\item There exists $\beta\in{X}$ $\backslash$ $\overline{B}_{\rho}(0)$ such that $\Phi(\beta)\leq0.$
 \end{enumerate}
Let $\Gamma$ be the set of all paths which connects $0$ and $\beta$, i.e.,
\[\Gamma=\{g\in{C}([0,1],X)\,|\,g(0)=0, g(1)=\beta\},\]
and
\begin{equation}
c=\inf_{g\in\Gamma}\max_{t\in[0,1]}\Phi(g(t)).
\end{equation}
Then $c\geq\alpha$ and $\Phi$ possesses a Palais-Smale sequence at
level $c$, i.e., there exists a sequence $(u_n)$ such
that $\Phi(u_n)\to{c}$ and $\Phi'(u_n)\to{0}$ as $n\to\infty$.

\end{lemma}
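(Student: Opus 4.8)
The plan is to read the statement as the Mountain Pass Theorem in its ``Palais--Smale sequence'' form and to split it into the two assertions $c\ge\alpha$ and the existence of the PS sequence, the latter being exactly the content of the cited general minimax principle.

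First I would establish the lower bound $c\ge\alpha$ by a purely topological argument that needs only hypotheses (1) and (2). Fix any $g\in\Gamma$. Since $g(0)=0$ has norm $0<\rho$ while $g(1)=\beta\notin\overline B_\rho(0)$ has norm $>\rho$, the continuous real function $t\mapsto\|g(t)\|$ takes a value below $\rho$ at $t=0$ and above $\rho$ at $t=1$; by the intermediate value theorem there is $t_0\in(0,1)$ with $\|g(t_0)\|=\rho$, i.e. $g(t_0)\in\partial B_\rho(0)$. Hypothesis (1) then gives $\Phi(g(t_0))\ge\alpha$, so $\max_{t\in[0,1]}\Phi(g(t))\ge\alpha$, and taking the infimum over $g\in\Gamma$ yields $c\ge\alpha>0$.

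To produce the Palais--Smale sequence at level $c$, I would cast the problem in the framework of the general minimax principle (Theorem 2.8 in \cite{willem}), taking $M=[0,1]$, $M_0=\{0,1\}$, and $\Gamma_0$ the one-element family consisting of the map sending $0\mapsto 0$ and $1\mapsto\beta$; then the associated class $\Gamma$ is precisely the set of paths joining $0$ and $\beta$, and the corresponding ``boundary level'' is $a=\max\{\Phi(0),\Phi(\beta)\}$. Since $\Phi(0)=0$ and $\Phi(\beta)\le 0$ we have $a=0$, while the first part gives $c\ge\alpha>0$; moreover $c<\infty$ because, e.g., the segment $t\mapsto t\beta$ is an admissible path and $\Phi$ is continuous on its compact image. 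Thus $a<c<\infty$, which are exactly the hypotheses of the cited theorem, whose conclusion is the existence of $(u_n)$ with $\Phi(u_n)\to c$ and $\Phi'(u_n)\to 0$.

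The step I expect to be the real crux — were one to avoid the black box — is the deformation argument hidden inside Theorem 2.8. Assuming no such PS sequence existed, one finds $\delta>0$ and a band $\{\,|\Phi-c|\le\bar\varepsilon\,\}$ on which $\|\Phi'\|\ge\delta$, and then uses a locally Lipschitz pseudo-gradient vector field for $\Phi$ to build a deformation $\eta$ pushing the sublevel set $\{\Phi\le c+\varepsilon\}$ into $\{\Phi\le c-\varepsilon\}$ while fixing every point already below level $c-2\varepsilon$, in particular the endpoints $0$ and $\beta$. Composing $\eta$ with a near-optimal path $g$ satisfying $\max_t\Phi(g(t))\le c+\varepsilon$ would yield an admissible path of maximum value below $c$, contradicting the definition of $c$. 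The pseudo-gradient construction and the endpoint-fixing property are the technical heart; the remainder is the elementary bookkeeping carried out above.
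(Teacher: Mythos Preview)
Your proposal is correct and matches the paper's approach exactly: the paper does not give a proof but simply states that the lemma ``is an immediate consequence of Theorem~2.8 in \cite[pg.~41]{willem}'', and you invoke that same theorem after verifying its hypotheses. Your explicit intermediate-value argument for $c\ge\alpha$ and the sketch of the underlying deformation lemma are welcome additions that the paper leaves implicit.
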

\section{Existence of Solitary Waves}\label{existence} 

 If $\phi(x-ct, y)$ is a solitary wave solution solution to
 \eqref{BOeq}, then \begin{equation} (-c\partial
   _x{\phi}+\phi^{p}\partial_x{\phi} +\mathcal H(\partial_x^2{\phi}+ \lambda \partial
   _{y}^2{\phi}))_x + \mu \pry^2 \phi
   =0. \label{(3.1)} \end{equation} 
If $\phi\in X^\frac12$, we can write~(\ref{(3.1)}) as 
\begin{equation}
  -c\phi+\mathcal H\partial_x\phi+\lambda \mathcal H\partial _x^{-1}\partial^2
  _y{\phi}+ \mu \partial _x^{-2}\partial^2
  _y{\phi}+\frac 1{p+1}\phi^{p+1}=0.
\label{(3.2)}
\end{equation}
where the term on the right hand is in $\left( X^\frac12 \right)^*$, the
topological dual of $X^\frac12$. 
 Then $\phi$ is a critical point of the functional $\Phi$ on
 $X^\frac12$ defined as
$$
\Phi(\phi)=\int_{\re^2}\frac12\left(c\phi^2+(D_x^{\frac12}\phi)^2+\lambda
  (D_x^{-\frac12}\partial _y \phi)^{2}+ \mu (\partial_x^{-1}\partial _y \phi)^{2}  \right)-\frac{\phi^{p+2}}{(p+1)(p+2)}\, dxdy.
$$ 
Let us see that $\Phi$ satisfies the conditions of the Lemma
\ref{existPSseq}. It is obvious that $\Phi$ is a $C^1$ functional for
$0<p\le 2$. $\Phi(0)=0$ and, since $$ \Phi(\phi)\ge
\frac{\min\{c,1\}}2\|\phi\|_{X^\frac12} -\frac
     {|\phi|^{p+2}}{(p+1)(p+2)}, $$ by Lemma \ref{lemma7}, there exist
     a $\rho$ such that $$\inf_{\partial B_\rho(0)}\Phi=\alpha>0,$$
     which shows $1)$. Now, for $\vartheta \in \re$ and $u\in
     X^\frac12$, $$\Phi(\vartheta u)= \vartheta^2
     \left(\Phi(u)+\int_{\re^2}\frac{u^{p+2}}{(p+1)(p+2)}\, dxdy
     \right) -\vartheta^{p+2}\int_{\re^2} \frac{u^{p+2}}{(p+1)(p+2)}\, dxdy.$$
Then, taking $u$ fixed and $\vartheta$ large enough, we have $2)$ with
$\beta =\vartheta u$. So, we have shown the following lemma.
\begin{lemma}\label{existPSseq1}
Let $\Phi$, $\alpha$ and $\beta$ be defined as above and let $ \Gamma$ and $c$
be defined as Lemma \ref{existPSseq}. Then, there exists a sequence
$(\phi_n)$ such that $\Phi(\phi_n)\to c$ and $\Phi'(\phi_n)\to 0$.
\end{lemma}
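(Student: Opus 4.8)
The plan is to apply the minimax principle of Lemma \ref{existPSseq} to the Banach space $X=X^\frac12$ and the functional $\Phi$ defined just above; the assertion of Lemma \ref{existPSseq1} is then nothing but the conclusion of that principle, once its two geometric hypotheses have been verified. This verification is precisely what the paragraph preceding the lemma carries out, so the proof amounts to collecting those observations and invoking Lemma \ref{existPSseq}. First I would record that $\Phi$ is of class $C^1$ on $X^\frac12$ for $0<p\le2$: the quadratic part is a continuous symmetric bilinear form (essentially $\langle\cdot,\cdot\rangle_{\frac12}$ with $c$ in place of the coefficient of the $L^2$ term), hence smooth, while the nonlinear term $\phi\mapsto\int_{\re^2}\phi^{p+2}\,dxdy$ is $C^1$ because the embedding $X^\frac12\hookrightarrow L^{p+2}(\re^2)$ of Lemma \ref{lemma7} holds exactly in the admissible range of $p$.

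Next I would check condition $1)$ of Lemma \ref{existPSseq}. Clearly $\Phi(0)=0$. From the lower bound
\[
\Phi(\phi)\ge \frac{\min\{c,1\}}2\ns{\phi}{X^\frac12}^2-\frac{1}{(p+1)(p+2)}\ns{\phi}{L^{p+2}}^{p+2},
\]
together with $\ns{\phi}{L^{p+2}}\le C\ns{\phi}{X^\frac12}$ from Lemma \ref{lemma7}, one obtains $\Phi(\phi)\ge a\rho^2-b\rho^{p+2}$ on $\partial B_\rho(0)$ with $a,b>0$. Since $p+2>2$, for $\rho>0$ small enough this is bounded below by some $\alpha>0$, which gives $\inf_{\partial B_\rho(0)}\Phi=\alpha>0$.

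For condition $2)$ I would exploit the scaling identity
\[
\Phi(\vartheta u)=\vartheta^2\Big(\Phi(u)+\int_{\re^2}\tfrac{u^{p+2}}{(p+1)(p+2)}\,dxdy\Big)-\vartheta^{p+2}\int_{\re^2}\tfrac{u^{p+2}}{(p+1)(p+2)}\,dxdy,
\]
valid for $\vartheta\in\re$ and $u\in X^\frac12$. Fixing any $u$ with $\int_{\re^2}u^{p+2}\,dxdy>0$ and letting $\vartheta\to\infty$, the $\vartheta^{p+2}$ term dominates because $p+2>2$, so $\Phi(\vartheta u)\to-\infty$; in particular, for $\vartheta$ large the point $\beta=\vartheta u$ lies outside $\overline B_\rho(0)$ and satisfies $\Phi(\beta)\le0$. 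With both hypotheses established, Lemma \ref{existPSseq} applied to $\Phi$ produces a Palais--Smale sequence at the level $c=\inf_{g\in\Gamma}\max_{t\in[0,1]}\Phi(g(t))$, i.e. a sequence $(\phi_n)$ with $\Phi(\phi_n)\to c$ and $\Phi'(\phi_n)\to0$, which is the claim. I would emphasize that no compactness (Palais--Smale condition) is needed at this stage; that enters only later, when a convergent subsequence of $(\phi_n)$ is extracted. The only point demanding genuine care is the $C^1$ regularity of the nonlinearity when $p$ is not an even integer, but this is standard given the $L^{p+2}$ embedding, so there is no deeper obstacle: the analytic substance already resides in Lemma \ref{lemma7}.
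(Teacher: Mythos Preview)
Your proposal is correct and follows essentially the same approach as the paper: you verify the two geometric hypotheses of the mountain-pass Lemma~\ref{existPSseq} exactly as the paragraph preceding Lemma~\ref{existPSseq1} does (using Lemma~\ref{lemma7} for the lower bound on $\partial B_\rho(0)$ and the scaling identity for the existence of $\beta$), and then invoke that lemma. Your write-up is in fact slightly more careful than the paper's, e.g.\ in noting that one should fix $u$ with $\int u^{p+2}>0$ and in commenting on the $C^1$ regularity of the nonlinear term.
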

 Now, we can prove the following theorem.
\begin{thm}
 \eqref{(3.1)} has nontrivial solutions in $X^\frac12$. 
\end{thm}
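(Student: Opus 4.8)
The plan is to produce a nontrivial critical point of $\Phi$ out of the Palais--Smale sequence $(\phi_n)$ supplied by Lemma \ref{existPSseq1}, by a standard mountain-pass plus concentration argument in which the vanishing alternative is excluded through Lemma \ref{lemmaeq9}. Write $\|\phi\|_*^2=\int_{\re^2} c\phi^2+(D_x^{1/2}\phi)^2+\lambda(D_x^{-1/2}\partial_y\phi)^2+\mu(\partial_x^{-1}\partial_y\phi)^2$, which is equivalent to $\|\cdot\|_{X^\frac12}^2$ since $c>0$. The first step is to bound $(\phi_n)$: because the nonlinear terms cancel in the combination $\Phi-\tfrac1{p+2}\Phi'(\cdot)(\cdot)$, one gets
\[
\Phi(\phi_n)-\frac1{p+2}\Phi'(\phi_n)\phi_n=\frac{p}{2(p+2)}\|\phi_n\|_*^2 .
\]
The left-hand side is at most $|\Phi(\phi_n)|+\frac1{p+2}\|\Phi'(\phi_n)\|\,\|\phi_n\|_{X^\frac12}$, and since $\Phi(\phi_n)\to c$ and $\Phi'(\phi_n)\to0$, this yields $\|\phi_n\|_*^2\le C+o(1)\|\phi_n\|_{X^\frac12}$, hence $(\phi_n)$ is bounded in $X^\frac12$.

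Next I would exclude vanishing. Suppose $\lim_n\sup_{(x,y)\in\re^2}\int_{B(x,y;R)}|\phi_n|^2\,dxdy=0$. Then Lemma \ref{lemmaeq9} forces $\phi_n\to0$ in $L^{p+2}(\re^2)$, the exponent $p+2$ lying in the admissible subcritical range. But $\|\phi_n\|_*^2=\Phi'(\phi_n)\phi_n+\frac1{p+1}\int_{\re^2}\phi_n^{p+2}=o(1)$, so $\phi_n\to0$ in $X^\frac12$ and $\Phi(\phi_n)\to0$, contradicting $c\ge\alpha>0$. Therefore vanishing fails, and there are $\delta>0$, a subsequence, and points $(x_n,y_n)$ with $\int_{B(x_n,y_n;R)}|\phi_n|^2\,dxdy\ge\delta$.

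I would then use translation invariance. The multipliers $D_x^{1/2}$, $D_x^{-1/2}\partial_y$, $\partial_x^{-1}\partial_y$ commute with translations, so $\|\cdot\|_{X^\frac12}$ and $\Phi$ are translation invariant; hence the shifts $\tilde\phi_n(x,y)=\phi_n(x+x_n,y+y_n)$ again form a bounded Palais--Smale sequence at level $c$, now with $\int_{B(0;R)}|\tilde\phi_n|^2\,dxdy\ge\delta$. Extract $\tilde\phi_n\rightharpoonup\phi$ weakly in $X^\frac12$; by the compact embedding of Lemma \ref{lemmaEC}, $\tilde\phi_n\to\phi$ in $L^p_{loc}(\re^2)$, so $\int_{B(0;R)}|\phi|^2\,dxdy\ge\delta$ and in particular $\phi\neq0$.

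Finally I would check that $\phi$ is a critical point. For a test function $\psi\in\partial_xS\subset X^\frac12$, the quadratic part of $\Phi'(\tilde\phi_n)\psi$ converges to that of $\Phi'(\phi)\psi$ by weak convergence, while the nonlinear part $\int_{\re^2}\tilde\phi_n^{\,p+1}\psi$ converges to $\int_{\re^2}\phi^{p+1}\psi$ using the local strong $L^{p+2}$ convergence from Lemma \ref{lemmaEC} together with the rapid decay of $\psi$ to control the tails; since $\Phi'(\tilde\phi_n)\psi\to0$ this gives $\Phi'(\phi)\psi=0$, and density of $\partial_xS$ yields $\Phi'(\phi)=0$. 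Thus $\phi$ is a nontrivial solution of \eqref{(3.2)}, hence of \eqref{(3.1)}. The main obstacle is the exclusion of vanishing, which rests squarely on Lemma \ref{lemmaeq9} (the estimate whose complete proof is the contribution of this paper); a secondary point requiring care is the passage to the limit in the nonlinear term, where one must keep the exponent $p+2$ strictly within the subcritical range covered by Lemma \ref{lemmaEC}.
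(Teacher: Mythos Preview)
Your proposal is correct and follows essentially the same route as the paper: bound the Palais--Smale sequence via the identity $\Phi-\tfrac1{p+2}\Phi'(\cdot)(\cdot)$, rule out vanishing through Lemma~\ref{lemmaeq9}, translate to recover mass, extract a nontrivial weak limit using the compact local embedding of Lemma~\ref{lemmaEC}, and pass to the limit in $\Phi'$. The only cosmetic differences are that the paper excludes vanishing by directly computing $\Phi(\phi_n)-\tfrac12\Phi'(\phi_n)\phi_n=\tfrac{p}{2(p+1)(p+2)}\int\phi_n^{p+2}\to c>0$, and handles the nonlinear term via the continuity of $u\mapsto u^{p+1}$ from $L^{p+2}$ to $L^{(p+2)/(p+1)}$ rather than through decaying test functions; both variants are equivalent.
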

\begin{proof}
It is enough to show that $\Phi$ have non-zero critical points in
$X^\frac12$. By Lemma \ref{existPSseq1}, there exists a Palais-Smale
sequence $(\phi_n)$ at level $c$ of $\Phi$. Therefore, $$c+1\ge
\Phi(\phi_n)-\frac {\langle
  \Phi'(\phi_n),\phi_n\rangle_{X^\frac12}}{p+2} \ge (\frac12
-\frac1{p+2})\min\{1,c\} \|\phi_n\|^2_{X^\frac12}, $$ for $n$ big
enough. Hence $(\phi_n)$ is bounded in $X^\frac12$. Considering
that $$ 0<c =\lim_{n\to \infty} \Phi(\phi_n) -\frac12 \langle
\Phi'(\phi_n),\phi_n\rangle_{X^\frac12} =\lim_{n\to \infty} \frac
p{2(p+2)(p+1)} \int_{\re^2}\phi_n^{p+2}\, dx dy,$$ the Lemma
\ref{lemmaeq9} implies that
$$ \delta =\limsup_{n\to\infty} \sup_{(x,y)\in \re^2}
\int_{(x,y)+\Omega_1} \phi_n^2 \, dxdy>0 .$$ Then, passing to a
subsequence if necessary, we can assume that there exists a sequence
$(x_n,y_n)$ in $\re$ such
that \begin{equation}\label{eq3.1}\int_{(x_n,y_n)+\Omega_1} \phi_n^2
  \, dxdy>\delta/2,\end{equation} for $n$ big enough. Let $\tilde
\phi_n=\phi_n(\cdot+(x_n,y_n))$. Then, again passing to a subsequence
if necessary, we can assume that, for some $\phi\in X^\frac12$,
$\tilde\phi_n\rightharpoonup \phi$ in $X^\frac12$. In view of
\eqref{eq3.1}, for $n$ large enough, and Lemma \ref{lemmaEC}, $\phi\ne
0$. The Lemma \ref{lemmaEC} and the continuity of the function $u\to
u^{p+1}$ from $L^{p+2}$ to $L^{\frac{p+2}{p+1}}$, in any measure
space, imply that $$ \langle \Phi'(\phi),w\rangle_{X^\frac12}
=\lim_{n\to\infty} \langle
\Phi'(\tilde\phi_n),w\rangle_{X^\frac12}=0.$$ This shows this theorem.
\end{proof}
\section{Smoothness of solitary wave}\label{smoothness}
In this section we shall proof that the solitary wave solution of
\eqref{BOeq} is $C^\infty$. 
\begin{thm}
Let $p=1$. If $\phi \in X^\frac12$ is solution to \eqref{(3.1)}, $\phi\in
H^\infty=\bigcap_0^\infty H^n$. Moreover, $\phi$ is analytic.
\end{thm}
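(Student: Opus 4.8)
The whole argument lives on the Fourier side of \eqref{(3.2)}. With $p=1$ the equation reads $\sigma(D)\phi=-\tfrac12\phi^2$, where $\sigma(D)$ is the Fourier multiplier operator with symbol
\[
\sigma(\xi,\eta)= -c+|\xi|+\lambda\frac{\eta^2}{|\xi|}+\mu\frac{\eta^2}{\xi^2},
\]
so that $\widehat\phi=-\tfrac12\,\sigma(\xi,\eta)^{-1}\,\widehat{\phi^2}$ wherever $\sigma\neq0$. The first thing I would record is the behaviour of $\sigma$ at infinity: since every term except $-c$ is nonnegative, a short case analysis (the bound $|\xi|+\lambda\eta^2/|\xi|\ge 2\sqrt\lambda|\eta|$ by the arithmetic--geometric mean inequality when $|\xi|\gtrsim1$, and domination by $\lambda\eta^2/|\xi|$ or $\mu\eta^2/\xi^2$ when $|\xi|\lesssim1$) yields a lower bound $|\sigma(\xi,\eta)|\ge c_0\,(\xi^2+\eta^2)^{\theta/2}$ for $|(\xi,\eta)|\ge R_0$, with $\theta=1$ when $\lambda>0$ and $\theta=\tfrac23$ when $\lambda=0,\ \mu>0$. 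In particular the set where $\sigma$ is small is bounded, so inverting $\sigma$ is only delicate on a compact frequency region.

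For the smoothness I would split $\phi=\phi_\flat+\phi_\sharp$ through a smooth cutoff $\chi$ that is $1$ on $B_{R_0}$ and supported in $B_{2R_0}$, setting $\widehat{\phi_\flat}=\chi\widehat\phi$. Then $\widehat{\phi_\flat}$ has compact support, so $\phi_\flat$ is (the restriction of) an entire function of exponential type by the Paley--Wiener theorem, and is harmless for both conclusions. For the high part, $\widehat{\phi_\sharp}=-\tfrac12(1-\chi)\sigma^{-1}\widehat{\phi^2}$, where $(1-\chi)\sigma^{-1}$ is now a bona fide symbol gaining $\theta$ derivatives. Because $\sigma$ is singular along $\{\xi=0\}$ and only Marcinkiewicz-regular, I would verify through Lizorkin's multiplier theorem \cite{liz} that $(1-\chi)\sigma^{-1}\langle(\xi,\eta)\rangle^{\theta}$ and the attendant symbols are bounded on every $L^r$, $1<r<\infty$. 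Starting from the integrability $\phi\in L^{3}$ supplied by Lemma \ref{lemma7}, the scheme $\phi\in L^q\Rightarrow\phi^2\in L^{q/2}\Rightarrow\phi_\sharp\in W^{\theta,q/2}$, combined with Sobolev embedding, raises the integrability of $\phi$ until $\phi\in L^\infty$; from there a Kato--Ponce type estimate gives $\phi\in H^s\Rightarrow\phi^2\in H^s\Rightarrow\phi_\sharp\in H^{s+\theta}$, and iterating lands $\phi$ in $H^\infty$.

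For the analyticity I would promote the Sobolev control to a uniform analytic (Gevrey-$1$) bound on the derivatives, in the spirit of \cite{kapi}. Differentiating $\sigma(D)\phi=-\tfrac12\phi^2$, applying $(1-\chi)\sigma^{-1}$, and expanding $\partial^\alpha(\phi^2)=\sum_{\beta\le\alpha}\binom{\alpha}{\beta}\partial^\beta\phi\,\partial^{\alpha-\beta}\phi$ by Leibniz, one sets up an induction in which the $\theta$-derivative gain at each step is played against the combinatorial factors; with the constants chosen suitably this proves $\|\partial^\alpha\phi\|_{L^2}\le C\,A^{|\alpha|}\,|\alpha|!$ for every multi-index $\alpha$. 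Equivalently $e^{a(|\xi|+|\eta|)}\widehat\phi\in L^2$ for some $a>0$, and a second appeal to Paley--Wiener shows that $\phi$ is the restriction to $\re^2$ of a function holomorphic on a complex polystrip, i.e.\ $\phi$ is real-analytic.

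The crux is the degenerate, anisotropic nature of $\sigma$: it is not elliptic in the usual sense (it is singular on $\{\xi=0\}$ and, in the form written in \eqref{(3.2)}, vanishes on a compact curve), and its order of ellipticity at infinity is anisotropic and only $\theta=\tfrac23$ when $\lambda=0$. Quarantining the bad bounded region by the cutoff removes the vanishing, but the singularity on $\{\xi=0\}$ is exactly what forces the use of Lizorkin's theorem in place of bare Plancherel estimates, and the fractional gain $\theta<1$ is what makes the analytic induction delicate, since less than one derivative is recovered per iteration while the factorial bookkeeping must still close. The initial climb from the weak $X^{1/2}$-regularity up to the algebra threshold $s>1$, where $L^2$ methods alone do not suffice, is the other point demanding the $L^p$ multiplier machinery.
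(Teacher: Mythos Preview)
Your outline is sound and rests on the same two pillars as the paper's proof: Lizorkin's multiplier theorem for the bootstrap to $H^\infty$ and an inductive factorial estimate in the style of \cite{kapi} for real-analyticity. The chief structural difference lies in how the vanishing of the linear symbol is dealt with. You retain the full symbol $\sigma=-c+|\xi|+\lambda\eta^2/|\xi|+\mu\eta^2/\xi^2$ and quarantine its compact zero set with a smooth frequency cutoff, disposing of the low-frequency piece via Paley--Wiener. The paper instead transfers the term $-c\phi$ to the right-hand side, so that the operator to be inverted becomes homogeneous (for instance the Laplacian when $\mu=0$, $\lambda=1$) and vanishes only at the origin; the relevant multipliers are then explicit rational expressions in $\xi,\eta,|\xi|$ whose Lizorkin conditions are immediate, and no cutoff is needed. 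For the analyticity step this trick pays a second dividend: testing the differentiated equation against $\partial^\alpha\phi$ in $H^2$ gives the clean integer-order gain $\|\nabla\partial^\alpha\phi\|_{H^2}\le C\|\partial^\alpha(\tfrac12\phi^2-c\phi)\|_{H^2}$, after which the induction closes via the Banach-algebra property of $H^2$ and a combinatorial lemma (the paper's Lemma~\ref{prop.ayu}). Your route is more systematic and works uniformly in the parameters, but the fractional gain $\theta<1$ in the case $\lambda=0$---which you rightly flag---would require either a non-integer induction scale or a direct Gevrey-norm fixed-point argument to close; the paper's rearrangement recovers a full derivative (at least when $\lambda>0$) and so sidesteps this bookkeeping.
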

\begin{proof}
  Suposse first that $\mu=0$. In this case, without loss of
  generality, we can suposse that $\alpha=1$. By Lemma \ref{lemma7},
  $\phi\in L^4$. In particular, $\frac12\phi^2+c\phi\in L^2$. Now,
  from \eqref{(3.1)}, we have
\begin{equation}\label{(4.1)}
\Delta \phi =\mathcal H\partial_x (\frac{ \phi^2}2 -c\phi).
\end{equation}
Then, the Plancherel theorem implies that $\phi\in H^1$. So, by
Sobolev embedding theorem, $\frac12\phi^2+c\phi\in L^p$, $2\le
p<\infty$. Since the Hilbert transform is bounded from $L^p\to L^p$
and, by Lizorkin theorem (see \cite{liz}), $\frac {\xi^2}{\xi^2+\eta^2}$ and $\frac
{\xi\eta}{\xi^2+\eta^2}$ are $L^p$ multipliers, from \eqref{(3.1)}, we
have that $\phi_x $ and $\phi_y\in L^p$. Whence, again \eqref{(3.1)}
implies that $\phi\in H^2$. The theorem follows once we have observed
that if $\phi\in H^n$ then $\phi\in H^{n+1}$, for $n\ge 2$. This last
affirmation follows from \eqref{(3.1)}, the fact that $H^n$ is a
Banach algebra, for $n\ge 2$, and Plancherel theorem.\par 
Suppose now $\gamma\mu\ne 0$. Without loss of generality, 
we can suppose also that $\gamma=\mu=1$. So, \eqref{(3.1)}
becomes in 
\begin{equation}\label{(4.2)}
  \mathcal H\prx^3 \phi+ \mathcal H\prx\pry^2 \phi -\pry^2 \phi
  =-\partial_x^2 (\frac{ \phi^2}2 - c\phi).
\end{equation}
From here, thanks to Lizorkin theorem, we have that $\frac
{\xi^3}{|\xi|^3+|\xi|\eta^2+\eta^2}$ and $\frac {\xi^2\eta} {|\xi|^3+
  |\xi|\eta^2+\eta^2}$ are multipliers in $L^p$, $1<p<\infty$. From
now on it is just follow the steps in the previous case. The case
$\gamma=0$ is was done in \cite{spahani}.\par
To see the analyticity of $\phi$ it is enough to prove that
\begin{equation} \|\partial^\alpha \phi\|_{H^2}\le 
C|\alpha|!\Big(\frac{R}{2}\Big)^{|\alpha|},\label{cond-fi}\end{equation} 
for some $R>0$ and for all $\alpha\in \nat^2$. We shall show that
there exists $R>0$ such that for all $\alpha\in \nat^2$
\begin{equation}\label{ine.ayuanal}\|\partial^\alpha \phi\|_{H^2}\le
C\frac{(|\alpha|-1)!}{(|\alpha|+1)^s}\Big(\frac{R}{2}\Big)^{|\alpha|-1},
\end{equation} where $s>1$. We see this by induction. For $|\alpha|=1$
the inequality~(\ref{ine.ayuanal}) is  obvious; it is sufficient to choose $C$ large
enough. Suppose now that~(\ref{ine.ayuanal})  is valid for $|\alpha|=1,
\cdots ,n$ and $R$ (that we shall conveniently choose later). From
equation \eqref{(3.1)} we have that 
\begin{equation}\label{(4.3)}
  \prx^2 \phi+\alpha \pry^2 \phi - \gamma\mathcal H\prx^{-1}\pry^2 \phi
  =\mathcal H\partial_x (\frac{ \phi^2}2 - c\phi).
\end{equation}
Applying $\partial^\alpha$ on both sides of the equation and making the
inner product in $H^2$ with $\partial^\alpha\phi$ in the last equation,
we can show that
\begin{equation}\label{ine.ay1} \|\nabla\partial^\alpha\phi\|_{H^2}\le
C\left\|\partial^ \alpha \Big(\frac
{\phi^2}2-c\phi)\Big) \right\|_{H^2}.\end{equation} 
For finishing the theorem's proof we need the following lemma. 
\begin{lemma}\label{prop.ayu}

(a) If $f$ and $\phi\in C^\infty(\re) $, then
$$\partial^\alpha \Big(f(\phi)\Big) =\suma_{j=1}^{|\alpha|}
\frac{f^{(j)}(\phi)}{j!}\mathop{\suma_{ \alpha_1+\cdots+\alpha_j=\alpha}}
\limits_{|\alpha_i|\ge 1,\ \forall\  1\le i\le j} \frac{\alpha!}{\alpha_1!
\cdots \alpha_j!} \partial^{\alpha_1}\phi \cdots \partial^{\alpha_j}\phi. $$ 

(b) For each $(n_1,\ldots,n_j)\in \nat^j$ we have $$|\alpha|!=\mathop{\suma_{
\alpha_1+\cdots+\alpha_j=\alpha}} \limits_{|\alpha_i|=n_i,\ \forall\  1\le i\le
j}\frac{\alpha!|\alpha_1|! \cdots |\alpha_j|!}{\alpha_1! \cdots \alpha_j!}.$$

(c) For $s>1$ there exists $C_2$ such that for all $j\ e\  k\in\nat$
$$\suma_{k_1+\cdots+k_j=k}\frac{1}{(k_1+1)^s\cdots (k_j+1)^s} \le
\frac{C_2^{j-1}}{(k+1)^s}$$ \end{lemma} Now we return to the proof of
the theorem. Part \emph{(a)} of Lemma \ref{prop.ayu}, inequality
\eqref{ine.ay1} and the fact $H^2$ is a Banach algebra imply that $$
\|\nabla\partial^\alpha\phi\|_{H^2} \le C_1\suma_{j=1}^{2}
\mathop{\suma_{ \alpha_1+\cdots+\alpha_j=\alpha}}
\limits_{|\alpha_i|\ge 1,\ \forall\ 1\le i\le j}
\frac{\alpha!}{\alpha_1! \cdots \alpha_j!}
\|\partial^{\alpha_1}\phi\|_ {H^2} \cdots
\|\partial^{\alpha_j}\phi\|_{H^2}.$$ By the induction hypothesis
and part \emph{(b)} of the same lemma, we have $$
\eqalign{\|\nabla\partial^\alpha\phi\|_{H^2} \le &
  C_1\suma_{j=1}^{2}\hskip -1mm C^jA^{|\alpha|-j}\hskip -8mm
  \mathop{\suma_{ n_1+\cdots+n_j=|\alpha|}} \limits_{n_i\ge 1,\
    \forall\ 1\le i\le j} \mathop{\suma_{
      \alpha_1+\cdots+\alpha_j=\alpha}} \limits_{|\alpha_i|=n_i,\
    \forall\ i} \frac{\alpha!}{\alpha_1! \cdots \alpha_j!} \frac
  {(|\alpha_1|\hskip -1mm-\hskip -1mm1)! \cdots (|\alpha_j|\hskip
    -1mm-\hskip -1mm1)!}{(|\alpha_1|\hskip -1mm+\hskip
    -1mm1)^s\cdots(|\alpha_j|\hskip -1mm+\hskip -1mm1)^s} \cr \le
  &C_1\suma_{j=1}^{2}{\tilde C}^jA^{|\alpha|-j} \mathop{\suma_{
      n_1+\cdots+n_j=|\alpha|}} \limits_{|n_i|\ge 1,\ \forall\ 1\le
    i\le j} \frac{|\alpha|!}{(n_1+1)^{s+1}\cdots(n_j+1)^{s+1}},}$$
where $A=\frac{R}{2}$, and from this inequality and part {\em (c)} of
Lemma \ref{prop.ayu}, we obtain that
$$\|\nabla\partial^\alpha\phi\|_{H^2}\le 
C_1\frac{|\alpha|!}{(|\alpha|+2)^s}A^{|\alpha|}\suma_{j=1}^{2}({\tilde
C}C_2)^jA^{-j}.$$ Now we can choose $R$. We take $A$
large enough such that $C_1\suma_{j=1}^{2}({\tilde
C}C_2)^jA^{-j}\le C$. It is clear that this choice does not depend on
$\alpha$. Therefore, with $R=2A$,
$$\|\nabla\partial^\alpha\phi\|_{H^2}\le 
C\frac{|\alpha|!}{(|\alpha|+2)^s}\Big(\frac{R}{2}\Big)^{|\alpha|},$$
that shows~(\ref{ine.ayuanal}). This completes the proof 

\end{proof}
\bibliographystyle{acm}
\bibliography{mybiblio}

\begin{thebibliography}{10}

\bibitem{ablow}
{\sc Ablowitz, M.~J., and Clarkson, P.~A.}
\newblock {\em Solitons, nonlinear evolution equations and inverse scattering},
  vol.~149 of {\em London Mathematical Society Lecture Note Series}.
\newblock Cambridge University Press, Cambridge, 1991.

\bibitem{abloseg}
{\sc Ablowitz, M.~J., and Segur, H.}
\newblock Long internal waves in fluids of great depth.
\newblock {\em Stud. Appl. Math. 62}, 3 (1980), 249--262.

\bibitem{Benjamin}
{\sc Benjamin, T.~B.}
\newblock Internal waves of permanent form in fluids of great depth.
\newblock {\em J. Fluid Mech. 29\/} (1967), 559--592.

\bibitem{berghlofs}
{\sc Bergh, J., and L{\"o}fstr{\"o}m, J.}
\newblock {\em Interpolation spaces: an introduction}.
\newblock Grundlehren der mathematischen {W}issenschaften. Springer-Verlag,
  1976.

\bibitem{guoboling}
{\sc Boling, G., and Yongqian, H.}
\newblock Remarks on the generalized {K}adomtsev-{P}etviashvili equations and
  two-dimensional {B}enjamin-{O}no equations.
\newblock {\em Proc. Roy. Soc. London Ser. A 452}, 1950 (1996), 1585--1595.

\bibitem{spahani}
{\sc Esfahani, A.}
\newblock Remarks on solitary waves of the generalized two dimensional
  benjamin-ono equation.
\newblock {\em Appl. Math. Comput. 218}, 2 (2011), 308--323.

\bibitem{iorio2}
{\sc I{\'o}rio, Jr., R.~J.}
\newblock The {B}enjamin-{O}no equation in weighted {S}obolev spaces.
\newblock {\em J. Math. Anal. Appl. 157}, 2 (1991), 577--590.

\bibitem{kapi}
{\sc Kato, K., and Pipolo, P.-N.}
\newblock Analyticity of solitary wave solutions to generalized
  {K}adomtsev-{P}etviashvili equations.
\newblock {\em Proc. Roy. Soc. Edinburgh Sect. A 131}, 2 (2001), 391--424.

\bibitem{kenko}
{\sc Kenig, C.~E., and Koenig, K.~D.}
\newblock On the local well-posedness of the {B}enjamin-{O}no and modified
  {B}enjamin-{O}no equations.
\newblock {\em Math. Res. Lett. 10}, 5-6 (2003), 879--895.

\bibitem{kim}
{\sc Kim, B.}
\newblock {\em Three-dimensional solitary waves in dispersive wave systems}.
\newblock PhD thesis, Massachusetts Institute of Technology. Dept. of
  Mathematics., Cambridge, MA, 2006.

\bibitem{kochtz}
{\sc Koch, H., and Tzvetkov, N.}
\newblock On the local well-posedness of the {B}enjamin-{O}no equation in
  {$H\sp s({\Bbb R})$}.
\newblock {\em Int. Math. Res. Not.}, 26 (2003), 1449--1464.

\bibitem{linaresponce}
{\sc Linares, F., and Ponce, G.}
\newblock {\em Introduction to nonlinear dispersive equations}.
\newblock Universitext (1979). Springer, 2009.

\bibitem{liz}
{\sc Lizorkin, P.~I.}
\newblock Multipliers of {F}ourier integrals in the spaces $l\sb{p,\,\theta }$.
\newblock {\em Proceedings of the Steklov Institute of Mathematics 89\/}
  (1967), 269--290.

\bibitem{matsuno}
{\sc Matsuno, Y.}
\newblock {\em Bilinear transformation method}, vol.~174 of {\em Mathematics in
  Science and Engineering}.
\newblock Academic Press Inc., Orlando, FL, 1984.

\bibitem{Ono}
{\sc Ono, H.}
\newblock Algebraic solitary waves in stratified fluids.
\newblock {\em J. Phys. Soc. Japan 39}, 4 (1975), 1082--1091.

\bibitem{ponce}
{\sc Ponce, G.}
\newblock On the global well-posedness of the {B}enjamin-{O}no equation.
\newblock {\em Differential Integral Equations 4}, 3 (1991), 527--542.

\bibitem{tao}
{\sc Tao, T.}
\newblock Global well-posedness of the {B}enjamin-{O}no equation in {$H\sp
  1({\bf R})$}.
\newblock {\em J. Hyperbolic Differ. Equ. 1}, 1 (2004), 27--49.

\bibitem{triebel}
{\sc Triebel, H.}
\newblock {\em Interpolation theory, function spaces, differential operators}.
\newblock North-Holland mathematical library. North-Holland Pub. Co., 1978.

\bibitem{willem}
{\sc Willem, M.}
\newblock {\em Minimax theorems}.
\newblock Progress in Nonlinear Differential Equations and their Applications,
  24. Birkh\"auser Boston Inc., Boston, MA, 1996.

\end{thebibliography}
\end{document}